\newtheorem{Lemma}{Lemma}[section]
\theoremstyle{definition}
\newtheorem{Theorem}[Lemma]{Theorem}
\newtheorem{Definition}[Lemma]{Definition}
\newtheorem{Fact}[Lemma]{Fact}
\newtheorem{Remark}[Lemma]{Remark}
\newtheorem{Corollary}[Lemma]{Corollary}
\newtheorem{Convention}[Lemma]{Convention}
\newcommand{\C}{\mathbb{C}}
\newcommand{\R}{\mathbb{R}}
\newcommand{\lie}[1]{\mathfrak{#1}}
\newcommand{\skipover}[1]{{}}
\newcommand{\diag}{\qopname\relax o{diag}}
\newcommand{\Image}{\qopname\relax o{Im}}
\title[Real double flag variety \& Galois cohomology]{Real double flag variety for the symmetric pair $(U(p,p),GL_{p}(\mathbb{C}))$ and Galois cohomology}
\author[K.\,Nishiyama \& T.\,Tauchi]{Kyo Nishiyama$^{1}$ and Taito Tauchi$^{2}$}
\address{$^{1,2}$\; Department of Mathematics, Aoyama Gakuin University, Sagamihara, Japan}
\thanks{K.~N.~is supported by JSPS KAKENHI Grant Number \#{21K03184}.}
\email{kyo@math.aoyama.ac.jp$^1$,
tauchi@math.aoyama.ac.jp$^2$}
\keywords{Double flag variety, Galois cohomology, indefinite unitary group}
\subjclass{primary 14M15; secondary 05E14, 11E72, 22E46.}
\newcommand{\version}{Ver.~0.0}
\newcommand{\setversion}[1]{\renewcommand{\version}{Ver.~{#1}}}
\begin{document}

\begin{abstract}
Let
$G$ be the indefinite unitary group $U(p,p)$,
$H\simeq GL_{p}(\mathbb{C})$ its symmetric subgroup,
$P_{S}$ the Siegel parabolic subgroup of $G$, 
and $B_{H}$ a Borel subgroup of $H$.
In this article, 
we give a classification of the orbit decomposition 
$H\backslash (H/B_{H}\times G/P_{S})$ of the real double flag variety 
by using the Galois cohomology 
in the case where $p=2$.
\end{abstract}

\maketitle

\section{Introduction and Main theorem}

Let $G_{\mathbb{C}}$ be a complex reductive algebraic group,
$B_{\mathbb{C}}$ a Borel subgroup of $G_{\mathbb{C}}$.
The orbit decomposition of the flag variety $G_{\mathbb{C}}/B_{\mathbb{C}}$ with respect to
a subgroup $ K_{\C} $ is related to the theory of $ (\lie{g}, K_{\C}) $-modules by the Beilinson-Bernstein localization \cite{BB-loc}, 
when $ K_{\C} $ has finitely many orbits in $ G_{\mathbb{C}}/B_{\mathbb{C}}$.  
Thus, such orbit decomposition in the flag variety is one of central subjects in the representation theory of real reductive groups.

The double flag variety was defined in \cite{KO11-DFV}
as a generalization of the flag variety, and it is ultimately related to the representation theory of $G_{\mathbb{C}}$.

\begin{Definition}
Let $G_{\mathbb{C}}$ be a complex reductive algebraic group, 
$H_{\mathbb{C}}$ a symmetric subgroup of $G_{\mathbb{C}}$, and 
$P_{G_{\mathbb{C}}}$ and $P_{H_{\mathbb{C}}}$ parabolic subgroups of 
$G_{\mathbb{C}}$ and $H_{\mathbb{C}}$, respectively. 
Then, the product 
$\mathfrak{X} \coloneqq H_{\mathbb{C}}/ P_{H_{\mathbb{C}}} \times  G_{\mathbb{C}}/ P_{G_{\mathbb{C}}} $ 
of two flag varieties with  a diagonal $H_{\mathbb{C}}$-action is called \textit{a double flag variety}. 
\end{Definition}

Clearly the above definition makes sense for any ground field instead of $ \C $, and 
in particular we can consider a double flag variety over $ \R $.
We call it a \textit{real double flag variety}.  
There are many interesting results on double flag varieties 
over the complex number field $\mathbb{C}$
from both of the geometric and representation theoretic point of view, e.g., see 
\cite{FN-21-embedding,FN-22-Steinberg,FN-23-AIII}.
It seems that
there are few studies 
of real double flag varieties.  
However, those double flag varieties over $ \R $ seem to have interesting properties and 
we expect useful applications to the geometry, representation theory, combinatorics, and so on.  
For example, for each open orbit in $ \mathfrak{X} $, we can construct an integral kernel operator which intertwines a degenerate principal series of $ G $ and that of $ H $.  Precise results and statements will be appear elsewhere.

In this article,
we give a classification of the orbit decomposition
of a real double flag variety
in some specific case, as a first step toward the general theory of real double flag varieties.
Our main theorem in this article is

\begin{Theorem}
\label{Them-Main-in-intro}
Let
$G$ be the indefinite unitary group 
$U(p,p)$,
$H \simeq GL_{p}(\mathbb{C})$ its symmetric subgroup,
$P$ the Siegel parabolic subgroup
of $G$,
and
$B$ a Borel subgroup of $H\simeq GL_{p}(\mathbb{C})$.
Then,
in the case where $p=2$,
complete representatives of 
the orbit decomposition 
$H\backslash(H/B \times G/P)
\simeq B\backslash G/P$
are given in 
the right-most terms in
Table 
\ref{Fig-list},
where we use the identification via an isomorphism in Lemma \ref{Lem-G/P=LM/GL}.
In particular, we have
$	\# (B\backslash G/P) = 17$. 
\end{Theorem}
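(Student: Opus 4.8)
The plan is to pass to the complexification and count the real orbits fiber-by-fiber over the complex orbits by means of Galois cohomology. Write $\Gamma = \mathrm{Gal}(\mathbb{C}/\mathbb{R}) = \{1,\sigma\}$. I first record the complexifications: since $G = U(2,2)$ we have $G_{\mathbb{C}} = GL_4(\mathbb{C})$ with $P_{\mathbb{C}}$ the stabilizer of a $2$-plane, so $G_{\mathbb{C}}/P_{\mathbb{C}} = \mathrm{Gr}(2,4)$; and since $H \simeq GL_2(\mathbb{C})$ is a complex group regarded as a real group, $H_{\mathbb{C}} \simeq GL_2(\mathbb{C})\times GL_2(\mathbb{C})$ and $B_{\mathbb{C}} = B_2 \times B_2$ is a product of Borel subgroups. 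Using the identification $H\backslash(H/B\times G/P)\simeq B\backslash G/P$ already built into the statement, I reduce to counting $B$-orbits on $G/P$, which over $\mathbb{C}$ becomes the finite set $B_{\mathbb{C}}\backslash G_{\mathbb{C}}/P_{\mathbb{C}}$.

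First I would enumerate the complex double cosets $B_{\mathbb{C}}\backslash G_{\mathbb{C}}/P_{\mathbb{C}}$. This is the complex double flag variety of type AIII studied in \cite{FN-23-AIII}, whose orbits are governed by a finite clan-type combinatorics; for $p=2$ the list is short enough to write down explicit representatives together with their stabilizers $S_{\mathcal{O}} = \mathrm{Stab}_{B_{\mathbb{C}}}(\mathcal{O})$. The key structural tool is the following standard consequence of nonabelian Galois cohomology: if a complex orbit $\mathcal{O}$ is defined over $\mathbb{R}$ and contains a real point $x_0$ with real stabilizer $S = \mathrm{Stab}_B(x_0)$, then the $B$-orbits on $(G/P)(\mathbb{R})$ lying inside $\mathcal{O}$ are in bijection with $\ker\bigl(H^1(\Gamma, S)\to H^1(\Gamma, B)\bigr)$, the bijection sending $B\cdot(g\cdot x_0)$ to the class of the cocycle $\sigma\mapsto g^{-1}\sigma(g)$.

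Now comes the decisive simplification. Because $B$ is a Borel subgroup of the complex group $GL_2(\mathbb{C})$ viewed over $\mathbb{R}$, it is the Weil restriction from $\mathbb{C}$ of a connected split solvable group, so Hilbert~90 via Shapiro's lemma gives $H^1(\Gamma, B)= H^1(\mathbb{C}, B_2)=1$. Hence each complex orbit with a real point contributes exactly $\#H^1(\Gamma, S)$ real orbits, and the theorem reduces to the identity
\[
\#(B\backslash G/P) \;=\; \sum_{\mathcal{O}\,:\,\mathcal{O}(\mathbb{R})\neq\emptyset} \#H^1(\Gamma, S_{\mathcal{O}}) \;=\; 17 .
\]
I would carry this out by, for each complex orbit, (i) deciding whether it is $\sigma$-stable and carries a real point, so that the conjugate pairs of orbits without real points drop out, and (ii) computing the finite pointed set $H^1(\Gamma, S_{\mathcal{O}})$ for the (generally solvable, possibly disconnected) real stabilizer, using the explicit real structure coming from the Hermitian form defining $U(2,2)$ and the embedding $GL_2(\mathbb{C})\hookrightarrow U(2,2)$.

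The main obstacle is precisely step (ii): the stabilizers $S_{\mathcal{O}}$ are neither all connected nor all tori, so each $H^1(\Gamma,\cdot)$ must be evaluated as a set of $\sigma$-twisted involutions up to twisted conjugacy rather than by a single closed formula, and it is the component groups interacting with the real form that produce the nontrivial multiplicities summing to $17$. Deciding which complex orbits admit real representatives is the other delicate point. I expect both computations to be organized most cleanly by treating $G/P$ through the concrete model furnished by Lemma~\ref{Lem-G/P=LM/GL}, which replaces the abstract Grassmannian by a space on which the real structure and the $B$-action are simultaneously transparent, so that representatives and stabilizers can be read off directly and tabulated as in Table~\ref{Fig-list}.
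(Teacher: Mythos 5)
Your proposal follows essentially the same route as the paper: reduce to $B\backslash G/P$, import the classification of $B_{\mathbb{C}}$-orbits on $G_{\mathbb{C}}/P_{\mathbb{C}}$ from \cite{FN-23-AIII}, prove $H^{1}(\mathbb{R},B_{\mathbb{C}})=1$ so that each $\tau$-stable complex orbit with a real point contributes exactly $\#H^{1}(\mathbb{R},(B_{\mathbb{C}})_{[\omega]})$ real orbits via the cocycle $g\mapsto g^{-1}\tau(g)$, and sum the cohomologies to get $17$. This is precisely the paper's Corollary \ref{Cor-B-G/P=coprod-H^1} and Section \ref{Section-proof-of-main-thm}. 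One sub-argument of yours is genuinely different and cleaner: you get $H^{1}(\mathbb{R},B_{\mathbb{C}})=1$ from the Weil-restriction structure of $B$ via Shapiro's lemma, whereas the paper does a hands-on twisted-conjugacy computation (Lemma \ref{Lem-H^1(B_C,R)=1}).

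Two concrete points need repair before your plan goes through. First, inside $GL_{4}(\mathbb{C})$ with the paper's involution one has $\tau(\diag(g_{1},g_{2}))=\diag((g_{2}^{*})^{-1},(g_{1}^{*})^{-1})$, which swaps the two $GL_{2}$-factors; hence the $\tau$-stable Borel containing the complexification of $B$ is $B_{2}\times\overline{B}_{2}$ as in \eqref{eq-def-P_C-B_C}, not $B_{2}\times B_{2}$. Your Shapiro argument survives this correction (the abstract real group is still $\mathrm{Res}_{\mathbb{C}/\mathbb{R}}B_{2}$), but the representatives of \cite{FN-23-AIII}, which are adapted to $B_{2}\times B_{2}$, must be twisted by the longest Weyl element in the second factor before any stabilizer or cocycle is computed; this is exactly the content of Remark \ref{Remark-Figure-FN}, and skipping it would give wrong stabilizers in Table \ref{Fig-list}. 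Second, your fiber-counting bijection counts $B$-orbits on the fixed-point set $(G_{\mathbb{C}}/P_{\mathbb{C}})^{\tau}$, whereas the theorem concerns $B\backslash G/P$ with $G/P=U(2,2)/P$; these coincide only after proving $(G_{\mathbb{C}}/P_{\mathbb{C}})^{\tau}=G/P$, i.e.\ that $U(2,2)$ acts transitively on $\tau$-fixed $2$-planes (equivalently, on maximal isotropic planes). The paper deduces this from $H^{1}(\mathbb{R},P_{\mathbb{C}})=1$ (Lemma \ref{Lem-H^1(B_C,R)=1} and Corollary \ref{Cor-(G/P)^{tau}=G^{tau}/P^{tau}}); note that Shapiro does not apply directly to $P_{\mathbb{C}}$, whose unipotent radical has real form $\operatorname{Her}_{2}(\mathbb{C})\simeq\mathbb{R}^{4}$ rather than a complex group viewed as real, so you need either the paper's computation (unipotent part trivial, Levi part by Shapiro) or Witt's extension theorem for Hermitian forms. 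With these two fixes, your steps (i) and (ii) are exactly what the paper carries out in Facts \ref{Fact-eqivalent-tau-fix-and-sigma-fix} and \ref{Fact-list-of-sigma-fixed-p=2} and in Section \ref{Section-proof-of-main-thm}.
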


Thanks to the theory of the 
Galois cohomology \cite{BS-GC,Serre-GC},
in principle, 
the classification of the orbit decomposition 
$H\backslash(H/B \times G/P)$
of a real double double flag variety can be reduced to that of the complexification 
$H_{\mathbb{C}} \backslash (H_{\mathbb{C}}/B_{\mathbb{C}} \times G_{\mathbb{C}}/P_{\mathbb{C}})$, 
where the subscript $\mathbb{C}$ 
denotes the complexification of those groups.
Although
it seems that
uniform calculations of the Galois 
cohomologies
are difficult in the general setting
of double flag varieties,
it works completely in our setting.
So we prove Theorem \ref{Them-Main-in-intro}
by using the 
Galois cohomology for $ p = 2$.

We have also obtained the same result as Theorem \ref{Them-Main-in-intro}
in general (namely, in the case of $p\geq 3$) by completely different method, 
namely, in terms of the Matsuki duality and the classification of $ K_{\C} $-orbits on $ G_{\C}/B_{\C} $.  
We will give it with details somewhere else.

\skipover{
The outline of this article as follows:
In Section \ref{Section-Preliminary}, we fix some notations about $U(p,p)$ and prove some elementary formulas.
In Section \ref{Section-Orbit-decomp-over-C}, we quote the results about the orbit decomposition 
$H_{\mathbb{C}}\backslash (H_{\mathbb{C}}/B_{\mathbb{C}} \times G_{\mathbb{C}}/P_{\mathbb{C}})$
of the complexification from \cite{FN-21-embedding,FN-23-AIII} and reduce the proof of Theorem \ref{Them-Main-in-intro} to the calculation of the Galois cohomologies.
Finally, we give the proof of Theorem \ref{Them-Main-in-intro} in Section \ref{Section-proof-of-main-thm}.
We also summarize the facts about the Galois cohomology in Appendix \ref{Section-Galois-cohomolgy} for the convenience.
}

\section{Preliminary}\label{Section-Preliminary}


\subsection{Indefinite unitary group}\label{Section-Over-Real}
Let $p\in\mathbb{Z}_{\geq 1}$ and define two matrices $\Sigma_{p}$ and $J_{p}$ by
\begin{equation}
	\Sigma_{p}
	\coloneqq
	\begin{pmatrix}
		0 & -I_{p}
		\\ 
		I_{p} & 0
	\end{pmatrix}
	,\quad
	J_{p}
	\coloneqq 
	\sqrt{-1}
	\Sigma_{p},
	\label{eq-def-Sigma-p-J-p}
\end{equation}
where $I_{p}$
is the identity matrix of size $ p $.
Note that 
$
	J_{p}^{-1}
	=
	J_{p},
	\;
	J_{p}^{*}
	=
	J_{p},
$
where $(\cdot)^{*}$ denotes 
the conjugate transpose.
We define 
the indefinite unitary group
by
\begin{equation}
	G\coloneqq
	U(p,p)
	\coloneqq
	\{g\in GL_{2p}(\mathbb{C})
	\mid 
	g^{*}J_{p}g = J_{p}
	\}.
\label{eq-def-U(p,p)}
\end{equation}
Moreover,
we define a symmetric subgroup $H$ of $G$
and
the Siegel parabolic subgroup $P$ of 
$G$
by
\begin{equation*}
	H
	\coloneqq
	\left\{
\diag(g, (g^{*})^{-1}) 
	\middle|
	g\in GL_{p}(\mathbb{C})
	\right\},
	\quad
	P
	\coloneqq
	\left\{
	\begin{pmatrix}
		g & u
		\\
		0 & (g^{*})^{-1}
	\end{pmatrix}
	\middle|
	\begin{array}{c}
		g \in GL_{p}(\mathbb{C})
		\\
		g^{-1} u \in \operatorname{Her}_p(\mathbb{C})
	\end{array}
	\right\},
\end{equation*}
where we denote $ \diag(a, b) = \begin{pmatrix}
		a & 0
		\\
		0 & b
	\end{pmatrix}
$ (a block diagonal matrix) and 
$\operatorname{Her}_p(\mathbb{C})$
is the space of Hermitian matrices of degree $p$.
We also define a Borel subgroup $B$ of $H$
by
$
	B
	\coloneqq
	\left\{
\diag(b, (b^{*})^{-1})
\skipover{	\begin{pmatrix}
		b & 0
		\\
		0 & (b^{*})^{-1}
	\end{pmatrix}
}
	\middle|
		b\in B_{p}
	\right\} $,
where 
$B_{p}$
is
the subgroup of $GL_{p}(\mathbb{C})$
of upper triangular matrices.

\begin{Definition}
Let $\operatorname{M}_{2p,p}(\mathbb{C})$
be the space of $(2p\times p)$-matrices.
	We define a subset 
	$\operatorname{LM}_{2p,p}^{\circ}$
	of certain full rank matrices in $\operatorname{M}_{2p,p}(\mathbb{C})$
	by
	\begin{equation}
	\operatorname{LM}_{2p,p}^{\circ}
	\coloneqq
		\left\{
		M(C,D)\coloneqq
		\begin{pmatrix}
			C
			\\
			D
		\end{pmatrix}
		\in \operatorname{M}_{2p,p}(\mathbb{C})
		\middle|
		\begin{array}{c}
			C,D\in \operatorname{M}_{p,p}(\mathbb{C})
			\\
			C^{*}D=D^{*}C
			\\
			\operatorname{rank}M(C,D)=p
		\end{array}
		\right\} .
	\label{eq-def-LM-and-M(C,D)}
	\end{equation}
\end{Definition}

\begin{Lemma}\label{Lem-G/P=LM/GL}
We get a $G$-equivariant isomorphism:
$
		G/P
		\simeq 
		\operatorname{LM}_{2p,p}^{\circ}
		/GL_{p}(\mathbb{C}) $.
\end{Lemma}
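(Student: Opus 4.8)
The plan is to realize the quotient $\operatorname{LM}_{2p,p}^{\circ}/GL_{p}(\mathbb{C})$ as the set of Lagrangian (maximal isotropic) subspaces for the Hermitian form attached to $J_{p}$, on which $G$ acts transitively, and to identify the stabilizer of a base point with the Siegel parabolic $P$. First I would reinterpret the defining conditions of $\operatorname{LM}_{2p,p}^{\circ}$ in terms of that form. For $M = M(C,D)$ a direct computation gives $M^{*}J_{p}M = \sqrt{-1}\,(D^{*}C - C^{*}D)$, so the relation $C^{*}D = D^{*}C$ is equivalent to $M^{*}J_{p}M = 0$; that is, the column space $\Image M \subseteq \mathbb{C}^{2p}$ is isotropic for the nondegenerate Hermitian form $\langle v,w\rangle \coloneqq v^{*}J_{p}w$. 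Since $J_{p}$ has signature $(p,p)$ (its eigenvalues are $\pm 1$, each of multiplicity $p$), a $p$-dimensional isotropic subspace is maximal isotropic, i.e.\ Lagrangian. Two full-rank matrices $M, M' \in \operatorname{M}_{2p,p}(\mathbb{C})$ have the same column space if and only if $M' = M a$ for some $a \in GL_{p}(\mathbb{C})$, so $M \mapsto \Image M$ induces a bijection between $\operatorname{LM}_{2p,p}^{\circ}/GL_{p}(\mathbb{C})$ and the set $\mathrm{Lag}$ of Lagrangian subspaces of $(\mathbb{C}^{2p}, \langle\cdot,\cdot\rangle)$.

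Next I would introduce the base point $M_{0} \coloneqq M(I_{p}, 0) = \begin{pmatrix} I_{p} \\ 0 \end{pmatrix}$, whose column space $V_{0}$ is Lagrangian, and consider the orbit map $\phi\colon G \to \operatorname{LM}_{2p,p}^{\circ}/GL_{p}(\mathbb{C})$, $g \mapsto [\,g M_{0}\,]$. This is well defined because $(gM_{0})^{*}J_{p}(gM_{0}) = M_{0}^{*}(g^{*}J_{p}g)M_{0} = M_{0}^{*}J_{p}M_{0} = 0$ for $g \in G$, and $gM_{0}$ has rank $p$; it is manifestly $G$-equivariant for the left $G$-actions. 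Transitivity of $G$ on $\mathrm{Lag}$ — equivalently surjectivity of $\phi$ — follows from Witt's extension theorem: since $\langle\cdot,\cdot\rangle$ vanishes identically on any Lagrangian, any linear isomorphism $V_{1} \to V_{0}$ between two Lagrangians is automatically an isometry and hence extends to a unitary automorphism $g \in G$ with $gV_{1} = V_{0}$.

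Finally I would compute the fibers. We have $\phi(g) = \phi(g')$ if and only if $gM_{0}$ and $g'M_{0}$ span the same subspace, i.e.\ $gV_{0} = g'V_{0}$, i.e.\ $g^{-1}g' \in \operatorname{Stab}_{G}(V_{0})$. Writing $g \in \operatorname{Stab}_{G}(V_{0})$ in block form $\begin{pmatrix} A & B \\ 0 & D \end{pmatrix}$ — the lower-left block vanishing precisely because $V_{0}$ is preserved — and imposing $g^{*}J_{p}g = J_{p}$ yields $A^{*}D = I_{p}$ and $D^{*}B = B^{*}D$, that is $D = (A^{*})^{-1}$ together with $A^{-1}B \in \operatorname{Her}_{p}(\mathbb{C})$, which is exactly the defining condition of $P$. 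Hence the fibers of $\phi$ are the right $P$-cosets, and $\phi$ descends to a $G$-equivariant bijection $G/P \to \operatorname{LM}_{2p,p}^{\circ}/GL_{p}(\mathbb{C})$.

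The only genuinely substantive step is transitivity, i.e.\ Witt's theorem for the indefinite Hermitian form; the reinterpretation of the isotropy condition and the identification of $\operatorname{Stab}_{G}(V_{0})$ with the Siegel parabolic are routine block-matrix manipulations. One should additionally record that $\phi$ is a submersion (a morphism for the relevant analytic/algebraic structures), so that the resulting bijection is an isomorphism of homogeneous spaces and not merely of sets; this is automatic from the orbit-map description once transitivity and the stabilizer computation are in hand.
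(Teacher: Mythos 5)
Your proof is correct and takes essentially the same route as the paper's (which is only a two-sentence sketch): both identify $G/P$ with the set of maximal isotropic subspaces of $(\mathbb{C}^{2p}, J_p)$ via the fact that $P$ is the stabilizer of one such subspace, and identify $\operatorname{LM}_{2p,p}^{\circ}/GL_{p}(\mathbb{C})$ with that same set through column spans. Your write-up simply supplies the details the paper leaves implicit — the computation $M^{*}J_{p}M = \sqrt{-1}\,(D^{*}C - C^{*}D)$, the signature argument, transitivity via Witt's theorem, and the block-matrix stabilizer calculation — all of which are accurate.
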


\begin{proof}
	The flag variety $G/P$ can be identified with 
	the set of all maximally isotropic subspaces of $\mathbb{C}^{2p}$
	because the Siegel parabolic subgroup $P \subset G$ is the stabilizer  
	of a maximally isotropic subspace of $\mathbb{C}^{2p}$ with respect to the indefinite Hermitian inner product defined by $J_{p}$.
	Under this identification, a maximal isotropic subspace corresponds to a matrix in $ \operatorname{LM}_{2p,p}^{\circ} $  whose columns consist of its basis modulo the action of $ GL_p(\C) $ (change of basis matrix).
\end{proof}

\subsection{The complexification}
\label{Section-Over-Complex}
We put 
$
	G_{\mathbb{C}}\coloneqq
	GL_{2p}(\mathbb{C})
$, the complexification of $ G $, 
and define an involution $\tau$ on $G_{\mathbb{C}}$ by
\begin{equation}
	\tau(g)
	\coloneqq
	J_{p}^{-1}
	(g^{*})^{-1}
	J_{p},
\label{eq-def-tau}
\end{equation}
where $J_{p}$ is defined in 
\eqref{eq-def-Sigma-p-J-p}.
Then, we get 
$
	G
	=
	G_{\mathbb{C}}^{\tau}
$ by definition of $ G $.
Moreover,
$(G_{\mathbb{C}},\tau)$
is an algebraic group defined over $\mathbb{R}$ with $ \tau $ as a complex conjugation 
(see Convention \ref{Conv-defined-over-R}).
We also define a symmetric subgroup $H_{\mathbb{C}}\simeq GL_{p}(\mathbb{C})
\times
GL_{p}(\mathbb{C})
$
of $G_{\mathbb{C}}$ by
\begin{equation}
	H_{\mathbb{C}}
	\coloneqq
	\left\{
	\diag(g_{1},g_{2})
\skipover{\coloneqq
	\begin{pmatrix}
		g_{1} & 0
		\\
		0 & g_{2}
	\end{pmatrix}
}
	\middle|
	g_{1},g_{2}\in GL_{p}(\mathbb{C})
	\right\}.
\label{eq-def-H-C}
\end{equation}
Define a $\tau$-stable maximal parabolic subgroup 
$P_{\mathbb{C}}$ of $G_{\mathbb{C}}$,
and a $\tau$-stable Borel subgroup 
$B_{\mathbb{C}}$ of $H_{\mathbb{C}}$ by
\begin{equation}
	P_{\mathbb{C}}
	\coloneqq
	\left\{
	\begin{pmatrix}
		g_{1} & u
		\\
		0 & g_{2}
	\end{pmatrix}
	\middle|
	\begin{array}{c}
		g_{1},g_{2} \in GL_{p}(\mathbb{C})
		\\
		u \in \operatorname{M}_{p,p}(\mathbb{C})
	\end{array}
	\right\},
	\qquad
		B_{\mathbb{C}}
	\coloneqq
	\left\{
	\begin{pmatrix}
		b & 0
		\\
		0 & c
	\end{pmatrix}
	\middle|
	\begin{array}{cc}
		b\in B_{p}
		\\
		c\in \overline{B}_{p}
	\end{array}
	\right\},
\label{eq-def-P_C-B_C}
\end{equation}
where $\overline{B}_{p}$ is the subgroup of $GL_{p}(\mathbb{C})$ of lower triangular matrices.
Then, we have
\begin{equation}
	G_{\mathbb{C}}^{\tau}=G,
	\quad
	H_{\mathbb{C}}^{\tau}=H,
	\quad
	P_{\mathbb{C}}^{\tau}=P,
	\quad
	B_{\mathbb{C}}^{\tau}=B,
	\label{eq-def-G-H-P-B}
\end{equation}
where the groups on the right-hand sides are real forms 
defined in Section \ref{Section-Over-Real}.

\begin{Definition}
	We define a subset
	$\operatorname{M}_{2p,p}^{\circ}$
	of $\operatorname{M}_{2p,p}(\mathbb{C})$
	by
	\begin{equation}
	\operatorname{M}_{2p,p}^{\circ}
	\coloneqq
		\left\{
		M(C,D)
		\in \operatorname{M}_{2p,p}(\mathbb{C})
		\middle|
			C,D\in \operatorname{M}_{p,p}(\mathbb{C}),\:
			\operatorname{rank}M(C,D)=p
		\right\},
	\label{eq-def-M-circ}
	\end{equation}
	where we use the notation $ M(C,D) $ given in \eqref{eq-def-LM-and-M(C,D)}.
\end{Definition}

\begin{Lemma}\label{eq-Lem-G_C/P_C=M/GL}\label{Lem-G_C/P_C=M/GL}
With the above notation,
	we have a $G_{\mathbb{C}}$-equivariant isomorphism:
$
		G_{\mathbb{C}}/P_{\mathbb{C}}
		\simeq 
		\operatorname{M}_{2p,p}^{\circ}
		/GL_{p}(\mathbb{C}) $.
Note that $ G_{\mathbb{C}}/P_{\mathbb{C}} $ is the Grassmannian of $ p $-dimensional subspaces in $ \C^{2p} $.  
\end{Lemma}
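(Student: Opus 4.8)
The plan is to mimic the argument of Lemma~\ref{Lem-G/P=LM/GL}, simply dropping the isotropy condition so that the relevant geometric object is the full Grassmannian rather than the variety of maximal isotropic subspaces. First I would observe that $P_{\mathbb{C}}$, being the group of block upper-triangular matrices $\begin{pmatrix} g_{1} & u \\ 0 & g_{2} \end{pmatrix}$, is precisely the stabilizer in $G_{\mathbb{C}} = GL_{2p}(\mathbb{C})$ of the coordinate $p$-plane $V_{0} \coloneqq \langle e_{1}, \dots, e_{p} \rangle \subset \mathbb{C}^{2p}$: a matrix preserves $V_{0}$ if and only if its lower-left $p \times p$ block vanishes. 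Since $GL_{2p}(\mathbb{C})$ acts transitively on the set of all $p$-dimensional subspaces of $\mathbb{C}^{2p}$, the orbit-stabilizer theorem identifies $G_{\mathbb{C}}/P_{\mathbb{C}}$ with the Grassmannian of $p$-planes, as already noted in the statement.

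Next I would define the map $\Phi \colon \operatorname{M}_{2p,p}^{\circ} \to G_{\mathbb{C}}/P_{\mathbb{C}}$ sending a rank-$p$ matrix $M(C,D)$ to the column span of its $p$ linearly independent columns. This is well defined because the rank condition forces a $p$-dimensional image, and it is visibly surjective, since any $p$-plane has a basis which we may arrange as the columns of a full-rank $(2p \times p)$-matrix. The fibres of $\Phi$ are exactly the right $GL_{p}(\mathbb{C})$-orbits, because two full-rank matrices have the same column span if and only if they differ by an invertible change-of-basis matrix acting on the right. Hence $\Phi$ descends to a bijection $\operatorname{M}_{2p,p}^{\circ}/GL_{p}(\mathbb{C}) \xrightarrow{\sim} G_{\mathbb{C}}/P_{\mathbb{C}}$.

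Finally I would verify $G_{\mathbb{C}}$-equivariance. The group $G_{\mathbb{C}}$ acts on $\operatorname{M}_{2p,p}^{\circ}$ by left multiplication $g \cdot M = gM$, which commutes with the right $GL_{p}(\mathbb{C})$-action and therefore descends to the quotient, while on the Grassmannian side it acts in the usual way on subspaces. Since the column span of $gM$ is exactly the image under $g$ of the column span of $M$, the map $\Phi$ intertwines the two actions, giving the desired $G_{\mathbb{C}}$-equivariant isomorphism. I do not anticipate a serious obstacle here; the only point requiring a little care is that the bijection is genuinely algebraic and equivariant rather than merely set-theoretic, which follows because $\Phi$ and its inverse are given by local polynomial formulas on the standard affine charts of the Grassmannian.
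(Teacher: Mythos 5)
Your proposal is correct and follows exactly the paper's approach: the paper's proof simply says the argument of Lemma~\ref{Lem-G/P=LM/GL} carries over, i.e., $P_{\mathbb{C}}$ is the stabilizer of a coordinate $p$-plane and points of the Grassmannian correspond to full-rank $(2p\times p)$-matrices modulo right $GL_{p}(\mathbb{C})$ change of basis, with the isotropy condition dropped. Your write-up just spells out the details (transitivity, fibres, equivariance) that the paper leaves implicit.
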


\begin{proof}
The same proof as
Lemma \ref{Lem-G/P=LM/GL}
works.
\end{proof}

\begin{Lemma}
\label{Lem-formulae-of-tau-and-H}
	For $\diag(g_{1},g_{2})\in H_{\mathbb{C}}$,
	we have
	\begin{equation}
	\diag(g_{1},g_{2})^{-1}
	=
	\diag(g_{1}^{-1},g_{2}^{-1}),
	\quad
		\tau(\diag(g_{1},g_{2}))
		=
		\diag((g_{2}^{*})^{-1},(g_{1}^{*})^{-1}).
		\label{eq-Lem-H(b,c)^{-1}}
	\end{equation}
Thus, 
in particular,
we have
(see 
\eqref{eq-def-G^{-gamma}} for the notation)
\begin{equation*}
	\tau(\diag(g_{1},g_{2}))^{-1}
	\diag(g_{1},g_{2})
	=
	\diag(g_{2}^{*}g_{1},g_{1}^{*}g_{2}),
	\quad
	H_{\mathbb{C}}^{-\tau}
	=
	\{
	\diag(g,g^{*})
	\mid
	g\in GL_{p}(\mathbb{C})
	\}.
\end{equation*}
\end{Lemma}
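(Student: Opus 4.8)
The plan is a direct computation from the definitions, reducing everything to block-matrix arithmetic. The first identity in the lemma is merely the formula for the inverse of a block-diagonal matrix and needs no argument. For the formula for $\tau$, I would substitute $g=\diag(g_1,g_2)$ into the definition $\tau(g)=J_p^{-1}(g^*)^{-1}J_p$ of \eqref{eq-def-tau} and evaluate the product. Since both conjugate transpose and inversion preserve block-diagonal form, $(g^*)^{-1}=\diag((g_1^*)^{-1},(g_2^*)^{-1})$, so the only substantive point is to understand conjugation by $J_p$.

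Recalling from \eqref{eq-def-Sigma-p-J-p} that $J_p=\sqrt{-1}\,\Sigma_p$ with $\Sigma_p$ anti-diagonal and that $J_p^{-1}=J_p$, the key observation is that conjugation by $\Sigma_p$ swaps the two diagonal blocks, $\Sigma_p\diag(A,B)\Sigma_p^{-1}=\diag(B,A)$. The two factors of $\sqrt{-1}$ arising from $J_p=\sqrt{-1}\,\Sigma_p$, combined with $J_p^{-1}=J_p$, contribute no net scalar, so applying the swap to $A=(g_1^*)^{-1}$ and $B=(g_2^*)^{-1}$ yields $\tau(\diag(g_1,g_2))=\diag((g_2^*)^{-1},(g_1^*)^{-1})$. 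The one place that demands care---and the only thing resembling an obstacle---is confirming that the scalar $\sqrt{-1}$ in $J_p$ and the single minus sign in $\Sigma_p$ cancel rather than survive the double multiplication $J_p(\cdot)J_p$; I would verify this by tracking the off-diagonal blocks of $\Sigma_p$ explicitly through both products.

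For the two displayed consequences I would feed the proven formulas back in. From $\tau(\diag(g_1,g_2))=\diag((g_2^*)^{-1},(g_1^*)^{-1})$ one gets $\tau(\diag(g_1,g_2))^{-1}=\diag(g_2^*,g_1^*)$, and multiplying into $\diag(g_1,g_2)$ gives $\diag(g_2^*g_1,g_1^*g_2)$. For the description of $H_{\mathbb{C}}^{-\tau}=\{h\in H_{\mathbb{C}}\mid\tau(h)=h^{-1}\}$ from \eqref{eq-def-G^{-gamma}}, equating $\diag((g_2^*)^{-1},(g_1^*)^{-1})$ with $\diag(g_1^{-1},g_2^{-1})$ blockwise forces $g_2=g_1^*$; writing $g=g_1$ gives exactly the elements $\diag(g,g^*)$, and the reverse inclusion is the one-line check $\tau(\diag(g,g^*))=\diag(g^{-1},(g^*)^{-1})=\diag(g,g^*)^{-1}$.
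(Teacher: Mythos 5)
Your proposal is correct and matches the paper's approach: the paper's proof is exactly the ``straightforward calculation'' you carry out, namely substituting $\diag(g_1,g_2)$ into $\tau(g)=J_p^{-1}(g^*)^{-1}J_p$ and using that conjugation by $J_p$ swaps the diagonal blocks (the scalars $\sqrt{-1}$ and the sign in $\Sigma_p$ cancelling, as you verify). The derivation of the two displayed consequences, including the blockwise identification $g_2=g_1^*$ for $H_{\mathbb{C}}^{-\tau}$, is likewise the intended argument.
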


\begin{proof}
The proof is straightforward by calculations.
\skipover{
The first equation of 	
\eqref{eq-Lem-H(b,c)^{-1}}
is a consequence of the definition
\eqref{eq-def-H-C}
of $\diag(g_{1},g_{2})$.
By the definition
we get 
\begin{align*}
	\tau(\diag(g_{1},g_{2}))
=
	J_{p}^{-1}
	(\diag(g_{1},g_{2})^{*})^{-1}
	J_{p}
\skipover{
= 
	\sqrt{-1}
	\begin{pmatrix}
		0 & -1 
		\\ 
		1 & 0
	\end{pmatrix}
	\begin{pmatrix}
		(g_{1}^{*})^{-1} & 0
		\\
		0 & (g_{2}^{*})^{-1}
	\end{pmatrix}
	\sqrt{-1}
	\begin{pmatrix}
		0 & -1 
		\\ 
		1 & 0
	\end{pmatrix}
}
= 
	\begin{pmatrix}
		(g_{2}^{*})^{-1} &  0
		\\ 
		0 & (g_{1}^{*})^{-1}
	\end{pmatrix},
\end{align*}
which is the second equation of 
\eqref{eq-Lem-H(b,c)^{-1}}.
The second sentence of the lemma follows from the formulas of \eqref{eq-Lem-H(b,c)^{-1}}.
}
\end{proof}

\begin{Lemma}
\label{Lem-H^1(B_C,R)=1}
Let $B_{\mathbb{C}}$ and 
$P_{\mathbb{C}}$ be the groups defined in \eqref{eq-def-P_C-B_C}. 
The first Galois cohomologies of $ B_{\C} $ and $ P_{\C} $ are trivial:
\begin{equation}
	H^{1}(\mathbb{R},B_{\mathbb{C}})
	=
	1,
	\quad
	H^{1}(\mathbb{R},P_{\mathbb{C}}) 
        = 1.
	\label{eq-Lem-H^1(B_C,R)-H^1(P_C,R)}
\end{equation}
\end{Lemma}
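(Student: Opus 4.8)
The plan is to describe $H^{1}(\mathbb{R},A)$ concretely for a $\tau$-stable subgroup $A\subseteq G_{\mathbb{C}}$ and then to exhibit explicit trivializing cochains. Recall that the $1$-cocycles are $Z^{1}(\mathbb{R},A)=A^{-\tau}:=\{a\in A\mid a\,\tau(a)=e\}$, and that two cocycles $a,a'$ are cohomologous iff $a'=\beta^{-1}a\,\tau(\beta)$ for some $\beta\in A$; in particular $[a]$ is trivial iff $a=\beta^{-1}\tau(\beta)$. Thus proving $H^{1}(\mathbb{R},A)=1$ is equivalent to showing that every element of $A^{-\tau}$ has the form $\beta^{-1}\tau(\beta)$.

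For $B_{\mathbb{C}}$ I would first compute $B_{\mathbb{C}}^{-\tau}$. Writing a general element as $\diag(b,c)$ with $b\in B_{p}$, $c\in\overline{B}_{p}$ and using $\tau(\diag(b,c))=\diag((c^{*})^{-1},(b^{*})^{-1})$ from Lemma \ref{Lem-formulae-of-tau-and-H}, the condition $\diag(b,c)\,\tau(\diag(b,c))=e$ reduces to $c=b^{*}$, so $B_{\mathbb{C}}^{-\tau}=\{\diag(b,b^{*})\mid b\in B_{p}\}$. Each such element is a coboundary: with $\beta=\diag(b^{-1},I)\in B_{\mathbb{C}}$ one checks $\beta^{-1}\tau(\beta)=\diag(b,b^{*})$ (the only thing to verify is $b^{-1}\in B_{p}$ and $I\in\overline{B}_{p}$). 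Hence $H^{1}(\mathbb{R},B_{\mathbb{C}})=1$. The identical computation with $b$ replaced by an arbitrary $g\in GL_{p}(\mathbb{C})$, together with $H_{\mathbb{C}}^{-\tau}=\{\diag(g,g^{*})\}$ from Lemma \ref{Lem-formulae-of-tau-and-H}, also gives $H^{1}(\mathbb{R},H_{\mathbb{C}})=1$; I record this, since it is used for $P_{\mathbb{C}}$.

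For $P_{\mathbb{C}}$ I would exploit the Levi decomposition $P_{\mathbb{C}}=H_{\mathbb{C}}\ltimes U_{\mathbb{C}}$ with unipotent radical $U_{\mathbb{C}}=\{\left(\begin{smallmatrix}I&u\\0&I\end{smallmatrix}\right)\mid u\in\operatorname{M}_{p,p}(\mathbb{C})\}$. A direct conjugation by $J_{p}$ shows that $U_{\mathbb{C}}$ is $\tau$-stable with $\tau$ acting by $u\mapsto u^{*}$, and that the Levi projection $\ell\colon P_{\mathbb{C}}\to H_{\mathbb{C}}$ is $\tau$-equivariant. The cocycle condition on $g=\left(\begin{smallmatrix}g_{1}&u\\0&g_{2}\end{smallmatrix}\right)$ then forces $g_{2}=g_{1}^{*}$ and $u^{*}=-u$. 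Trivialization proceeds in two steps. First, $\ell(g)=\diag(g_{1},g_{1}^{*})\in H_{\mathbb{C}}^{-\tau}$ is a coboundary by the previous paragraph, say $\ell(g)=\beta_{0}^{-1}\tau(\beta_{0})$ with $\beta_{0}=\diag(g_{1}^{-1},I)\in H_{\mathbb{C}}$; replacing $g$ by the cohomologous cocycle $\beta_{0}\,g\,\tau(\beta_{0})^{-1}$ and using $\tau$-equivariance of $\ell$, I may assume $g_{1}=g_{2}=I$, so that $g$ now lies in $U_{\mathbb{C}}^{-\tau}$, i.e. $g=\left(\begin{smallmatrix}I&u\\0&I\end{smallmatrix}\right)$ with $u$ anti-Hermitian. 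Second, with $\beta=\left(\begin{smallmatrix}I&-u/2\\0&I\end{smallmatrix}\right)\in U_{\mathbb{C}}$ and $\tau(\beta)=\left(\begin{smallmatrix}I&-u^{*}/2\\0&I\end{smallmatrix}\right)$ one gets $\beta^{-1}\tau(\beta)=\left(\begin{smallmatrix}I&u\\0&I\end{smallmatrix}\right)$ because $u/2-u^{*}/2=u$. Hence $g$ is trivial and $H^{1}(\mathbb{R},P_{\mathbb{C}})=1$.

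Conceptually this is nothing more than the exact sequence of pointed sets attached to $1\to U_{\mathbb{C}}\to P_{\mathbb{C}}\to H_{\mathbb{C}}\to 1$, combined with $H^{1}(\mathbb{R},U_{\mathbb{C}})=1$ (additive Hilbert~90, as $U_{\mathbb{C}}\cong(\operatorname{M}_{p,p}(\mathbb{C}),+)$ is a vector group) and $H^{1}(\mathbb{R},H_{\mathbb{C}})=1$; I prefer the explicit cochains above, which keep the argument self-contained. The only genuine calculation is the determination of $\tau$ on the off-diagonal block of $P_{\mathbb{C}}$ via conjugation by $J_{p}$. I expect the main point requiring care to be the nonabelian bookkeeping in the two-step reduction: one must check that $\ell$ is $\tau$-equivariant and that twisting by $\beta_{0}\in H_{\mathbb{C}}$ kills the Levi part while keeping the cocycle inside $U_{\mathbb{C}}$ (which is automatic, since cohomologous cocycles are cocycles and $U_{\mathbb{C}}^{-\tau}$ is exactly the anti-Hermitian part).
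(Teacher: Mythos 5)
Your proof is correct --- the formulas all check out ($B_{\mathbb{C}}^{-\tau}=\{\diag(b,b^{*})\mid b\in B_{p}\}$ with cobounding element $\diag(b^{-1},I_{p})\in B_{\mathbb{C}}$; the cocycle conditions $g_{2}=g_{1}^{*}$ and $u^{*}=-u$ on $P_{\mathbb{C}}$; the two-step trivialization) --- but it is organized differently from the paper's. The paper never computes with $B_{\mathbb{C}}$ or $P_{\mathbb{C}}$ as a whole: it invokes the exact sequence of pointed sets (Fact \ref{Fact-Galois-exact-seq-long}) twice, reducing $B_{\mathbb{C}}$ to the pair $N_{\mathbb{C}}\trianglelefteq B_{\mathbb{C}}$ with quotient $T_{\mathbb{C}}\simeq B_{\mathbb{C}}/N_{\mathbb{C}}$, and $P_{\mathbb{C}}$ to the pair $U_{\mathbb{C}}\trianglelefteq P_{\mathbb{C}}$ with quotient $H_{\mathbb{C}}\simeq P_{\mathbb{C}}/U_{\mathbb{C}}$, and then proves the four small cohomologies trivial by exactly the kind of coboundary computations you perform. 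You do the opposite: for $B_{\mathbb{C}}$ you work directly on the whole Borel with no d\'evissage at all (possible because the cobounding element $\diag(b^{-1},I_{p})$ visibly stays inside $B_{\mathbb{C}}$), and for $P_{\mathbb{C}}$ you carry out the d\'evissage by hand, twisting first by a Levi element and then by a unipotent one --- which, as you note yourself, is an unwinding of the exact-sequence argument. Your version buys self-containedness: the only external input is the identification $H^{1}(\mathbb{R},A)\simeq A^{-\tau}/\underset{\tau}{\sim}$ of Lemma \ref{Lem-Galois-1st-isom-Galois-conj}, and the bookkeeping that the exact sequence hides ($\tau$-equivariance of the Levi projection, the twisted cocycle landing in $U_{\mathbb{C}}^{-\tau}$) is stated and verified explicitly. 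The paper's version buys modularity and brevity: the computations for $T_{\mathbb{C}}$, $N_{\mathbb{C}}$, $H_{\mathbb{C}}$, $U_{\mathbb{C}}$ are independent and identical in shape, the reduction step is a citation rather than an argument, and the same pattern is what gets recycled in Section \ref{Section-proof-of-main-thm}, where the cohomologies of the stabilizers $(B_{\mathbb{C}})_{[\omega]}$ are computed ``by the similar arguments'' as in this lemma.
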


\begin{proof}
First,
let us prove $ H^{1}(\mathbb{R},B_{\mathbb{C}}) = 1 $.
Define two subgroups of $H_{\mathbb{C}}$
\begin{equation}
    \begin{split}
	T_{\mathbb{C}}
	&\coloneqq
	\left\{
	T(t,s)
	\coloneqq
	\diag\left(
	t
	,
	s
	\right)
	\middle|
	t,s\in 
	GL_{1}(\mathbb{C})^{p}
	\right\}
	,
	\\
	N_{\mathbb{C}}
	&\coloneqq
	\left\{
	N(n_{+},n_{-})
	\coloneqq
	\diag\left(
	n_{+}
	,
	n_{-}	
	\right)
	\middle|
	n_{+} \in N_{\mathbb{C}}^{+},
    n_{-} \in N_{\mathbb{C}}^{-}
	\right\},
    \end{split}
\label{eq-def-T-and-N}
\end{equation}
where we identify $GL_{1}(\C)^{p}$ with the subgroup of $GL_{p}(\C)$ consisting diagonal matrices and
$N_{\mathbb{C}}^{+}$ (resp.~$N_{\mathbb{C}}^{-}$) is the group of upper (resp.~lower) unitriangular matrices.
Then,
we have
$B_{\mathbb{C}}/N_{\mathbb{C}}\simeq T_{\mathbb{C}}$.
Thus,
it is sufficient to prove 
$
H^{1}(\mathbb{R},T_{\mathbb{C}})=1
$
and 
$
H^{1}(\mathbb{R},N_{\mathbb{C}})=1
$
by Fact \ref{Fact-Galois-exact-seq-long}.
For this purpose,
by using Lemma 
\ref{Lem-formulae-of-tau-and-H},
we calculate
\begin{gather*}
T_{\mathbb{C}}^{-\tau}
=
	\{
	T(t,\overline{t})
	\mid
	t\in 
	GL_{1}(\mathbb{C})^{p}
	\},
	\quad
N_{\mathbb{C}}^{-\tau}	
=
	\{
	N(n_{+},n_{+}^{*})
	\mid
	n_{+}\in 
	N_{\mathbb{C}}^{+}
	\}
\qquad \text{ and }
\\
\tau(T(t,s))^{-1}
T(t,s)
=
T(
\overline{s}t,
\overline{t}s),
\\
\tau(N(n_{+},n_{-}))^{-1}
N(n_{+},n_{-})
=
N(n_{-}^{*}n_{+},
n_{+}^{*}n_{-}).
\end{gather*}
These two formulas imply that
for any 
$
	T(t,\overline{t})
	\in
	T_{\mathbb{C}}^{-\tau} 
$ and 
$
	N(n_{+},n_{+}^{*})
	\in 
	N_{\mathbb{C}}^{-\tau}
$,
we have
(see Definition \ref{Def-sim_gamma} for the notation $[\cdot]_{\tau}$)
\begin{equation*}
\begin{array}{rcrcl}
	\tau(T(t,I_{p}))^{-1}
	T(t,I_{p})
	&=&
	T(t,\overline{t})
	,
	&
	\text{namely,}
	&
	[T(t,\overline{t})]_{\tau}
	=
	[e]_{\tau},
	\\
	\tau(N(n_{+},I_{p}))^{-1}
	N(n_{+},I_{p})
	&=&
	N(n_{+},n_{+}^{*})
	,
	&
	\text{namely,}
	&
	[N(n_{+},n_{+}^{*})]_{\tau}
	=
	[e]_{\tau},
\end{array}
\end{equation*}
where $e$ denotes the identity element.
These formulas  
mean that 
any elements of $T_{\mathbb{C}}^{-\tau}$
and $N_{\mathbb{C}}^{-\tau}$
are $\tau$-conjugate to the identity element,
which implies
$
H^{1}(\mathbb{R},T_{\mathbb{C}})=1
$
and
$
H^{1}(\mathbb{R},N_{\mathbb{C}})=1
$
by Lemma
\ref{Lem-Galois-1st-isom-Galois-conj}.

Next, we prove $ H^{1}(\mathbb{R},P_{\mathbb{C}}) = 1 $, 
the second equation of \eqref{eq-Lem-H^1(B_C,R)-H^1(P_C,R)}.
Define a subgroup of $P_{\mathbb{C}}$ by
\begin{align*}
	U_{\mathbb{C}}
	&\coloneqq
	\left\{
	U(u)
	\coloneqq
	\begin{pmatrix}
		I_{p} & u
		\\
		0 & I_{p}
	\end{pmatrix}
	\middle|
	u\in 
	\operatorname{M}_{p,p}(\mathbb{C})
	\right\}.
\end{align*}
Then,
we have
$P_{\mathbb{C}}/U_{\mathbb{C}}\simeq H_{\mathbb{C}}$.
Thus,
it is sufficient to prove 
$
H^{1}(\mathbb{R},H_{\mathbb{C}})=1
$ and $
H^{1}(\mathbb{R},U_{\mathbb{C}})=1
$ 
by Fact \ref{Fact-Galois-exact-seq-long}.
For this purpose,
we calculate
\begin{equation*}
	\tau(U(u))
	=
		\sqrt{-1}
	\begin{pmatrix}
		0 & -I_{p}
		\\ 
		I_{p} & 0
	\end{pmatrix}
	\begin{pmatrix}
		I_{p} & 0
		\\
		-u^{*} & I_{p}
	\end{pmatrix}
	\sqrt{-1}
	\begin{pmatrix}
		0 & -I_{p} 
		\\ 
		I_{p} & 0
	\end{pmatrix}
	=
	\begin{pmatrix}
		I_{p} & u^{*}
		\\
		0 & I_{p}
	\end{pmatrix}
    =U(u^{*}).
\end{equation*}
Therefore,
combining Lemma \ref{Lem-formulae-of-tau-and-H},
we get
\begin{gather*}
H_{\mathbb{C}}^{-\tau}
=
	\{
	\diag(g,g^{*})
	\mid
	g\in 
	GL_{p}(\mathbb{C})
	\},
	\quad
U_{\mathbb{C}}^{-\tau}	
=
	\{
	U(v)
	\mid
	v=-v^{*}
	\} 
\qquad
\text{ and }
\\
	\tau(\diag(g_{1},g_{2}))^{-1}
	\diag(g_{1},g_{2})
	=
	\diag(g_{2}^{*}g_{1},g_{1}^{*}g_{2})
	,
	\quad
\tau(U(u))^{-1}
U(u)
=
U(-u^{*}+u).
\end{gather*}
These two formulas imply that
for any 
$
	\diag(g,g^{*})
	\in
	H_{\mathbb{C}}^{-\tau}
	\text{ and }
	U(v)
	\in 
	U_{\mathbb{C}}^{-\tau}
$,
we have
\begin{equation*}
	\tau(\diag(g,I_{p}))^{-1}
	\diag(g,I_{p})
	=
	\diag(g,g^{*})
	,
	\quad
	\tau(U(2^{-1}v))^{-1}
	U(2^{-1}v)
	=
	U(v),
\end{equation*}
which
shows
$
H^{1}(\mathbb{R},H_{\mathbb{C}})=1
$ and $
H^{1}(\mathbb{R},U_{\mathbb{C}})=1
$
by Lemma
\ref{Lem-Galois-1st-isom-Galois-conj}.
\end{proof}

\begin{Corollary}\label{Cor-(G/P)^{tau}=G^{tau}/P^{tau}}
For $G_{\mathbb{C}}=GL_{2p}(\mathbb{C})$ and 
$P_{\mathbb{C}}$ 
in \eqref{eq-def-P_C-B_C}, 
we have
$
		(G_{\mathbb{C}}/P_{\mathbb{C}})^{\tau}
		=
		G_{\mathbb{C}}^{\tau}/P_{\mathbb{C}}^{\tau}
  = G/P
$.
\end{Corollary}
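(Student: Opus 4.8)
The plan is to derive the Corollary from the vanishing $H^{1}(\mathbb{R}, P_{\mathbb{C}}) = 1$ of Lemma \ref{Lem-H^1(B_C,R)=1}, by running the homogeneous-space exact sequence of Galois cohomology attached to the fibration $P_{\mathbb{C}} \to G_{\mathbb{C}} \to G_{\mathbb{C}}/P_{\mathbb{C}}$. First I would dispose of the easy inclusion: by \eqref{eq-def-G-H-P-B} we have $G_{\mathbb{C}}^{\tau} = G$ and $P_{\mathbb{C}}^{\tau} = P$, and any coset $gP_{\mathbb{C}}$ with $g \in G_{\mathbb{C}}^{\tau}$ satisfies $\tau(gP_{\mathbb{C}}) = \tau(g)P_{\mathbb{C}} = gP_{\mathbb{C}}$, whence $G/P = G_{\mathbb{C}}^{\tau}/P_{\mathbb{C}}^{\tau} \subseteq (G_{\mathbb{C}}/P_{\mathbb{C}})^{\tau}$. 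All the content lies in the reverse inclusion, i.e.\ in showing that every $\tau$-fixed coset admits a $\tau$-fixed (equivalently, real) representative.

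Next I would set up the connecting map. Regarding $G_{\mathbb{C}} \to G_{\mathbb{C}}/P_{\mathbb{C}}$ as a right $P_{\mathbb{C}}$-torsor defined over $\mathbb{R}$ (with $\tau$ as complex conjugation, cf.\ Convention \ref{Conv-defined-over-R}), Fact \ref{Fact-Galois-exact-seq-long} yields an exact sequence of pointed sets
\[
G_{\mathbb{C}}^{\tau} \longrightarrow (G_{\mathbb{C}}/P_{\mathbb{C}})^{\tau} \xrightarrow{\ \delta\ } H^{1}(\mathbb{R}, P_{\mathbb{C}}).
\]
Concretely, for a $\tau$-fixed coset $x = gP_{\mathbb{C}}$ the equality $\tau(g)P_{\mathbb{C}} = gP_{\mathbb{C}}$ forces $\tau(g)^{-1}g \in P_{\mathbb{C}}$, and $\delta(x) = [\tau(g)^{-1}g]_{\tau}$ under the description of $H^{1}(\mathbb{R}, P_{\mathbb{C}})$ by $\tau$-conjugacy classes (Definition \ref{Def-sim_gamma}, Lemma \ref{Lem-Galois-1st-isom-Galois-conj}). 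By exactness, the image of $G_{\mathbb{C}}^{\tau}$ in $(G_{\mathbb{C}}/P_{\mathbb{C}})^{\tau}$ is precisely the fibre $\delta^{-1}([e]_{\tau})$ over the distinguished point.

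Finally I would invoke the vanishing. Since $H^{1}(\mathbb{R}, P_{\mathbb{C}}) = 1$ by Lemma \ref{Lem-H^1(B_C,R)=1}, the target of $\delta$ is a singleton, so $\delta^{-1}([e]_{\tau}) = (G_{\mathbb{C}}/P_{\mathbb{C}})^{\tau}$ and the natural map $G_{\mathbb{C}}^{\tau} = G \to (G_{\mathbb{C}}/P_{\mathbb{C}})^{\tau}$ is surjective. Combined with the injectivity of the induced map $G_{\mathbb{C}}^{\tau}/P_{\mathbb{C}}^{\tau} \hookrightarrow (G_{\mathbb{C}}/P_{\mathbb{C}})^{\tau}$ (the left-hand portion of the same exact sequence), this gives $(G_{\mathbb{C}}/P_{\mathbb{C}})^{\tau} = G_{\mathbb{C}}^{\tau}/P_{\mathbb{C}}^{\tau} = G/P$, as claimed.

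The one point deserving care — and the only real obstacle — is that this is nonabelian cohomology of pointed sets, not ordinary group cohomology: exactness alone merely identifies the image of $G_{\mathbb{C}}^{\tau}$ with a \emph{single} fibre of $\delta$, so surjectivity genuinely requires the triviality of the entire set $H^{1}(\mathbb{R}, P_{\mathbb{C}})$ and not just of its distinguished element. That stronger statement is exactly what Lemma \ref{Lem-H^1(B_C,R)=1} supplies, after which the Corollary is immediate. (Note that this argument never uses $p = 2$, so the identity holds for all $p$.)
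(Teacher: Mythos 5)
Your proof is correct and follows essentially the same route as the paper's: the paper likewise deduces the Corollary from the exact sequence \eqref{eq-Fact-long-ex-seq-Galois}, the vanishing $H^{1}(\mathbb{R},P_{\mathbb{C}})=1$ of Lemma \ref{Lem-H^1(B_C,R)=1}, and the identifications \eqref{eq-def-G-H-P-B}. One small correction: since $P_{\mathbb{C}}$ is not normal in $G_{\mathbb{C}}$, the sequence you use should be cited from Fact \ref{Fact-Galois-exact-seq} (valid for an arbitrary subgroup defined over $\mathbb{R}$) rather than Fact \ref{Fact-Galois-exact-seq-long}, whose statement requires $G_{\mathbb{C}}/H_{\mathbb{C}}$ to be a group.
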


\begin{proof}
	This follows from 
	the exact sequence \eqref{eq-Fact-long-ex-seq-Galois},
	Lemma \ref{Lem-H^1(B_C,R)=1},
    and the equation \eqref{eq-def-G-H-P-B}.
\end{proof}

\section{Orbit decomposition over $\mathbb{C}$}
\label{Section-Orbit-decomp-over-C}
In this section,
we quote some results 
about 
$B_{\mathbb{C}}\backslash G_{\mathbb{C}}/P_{\mathbb{C}}$
from the previous work
\cite{FN-23-AIII},
where 
$G_{\mathbb{C}}, P_{\mathbb{C}}$
and
$B_{\mathbb{C}}$
are defined in Section \ref{Section-Over-Complex}.  
For 
$ \omega \in \operatorname{M}_{2p,p}^{\circ}
$, 
we denote the corresponding point $ \omega GL_{p}(\mathbb{C}) \in \operatorname{M}_{2p,p}^{\circ}/GL_{p}(\mathbb{C}) \simeq G_{\mathbb{C}}/P_{\mathbb{C}} $ by 
$ [\omega] $ (since $ G_{\mathbb{C}}/P_{\mathbb{C}} $ is the Grassmannian of $ p $-dimensional subspaces in $ \C^{2p} $, 
$ [\omega] \in G_{\mathbb{C}}/P_{\mathbb{C}} $ represents a subspace generated by its column vectors, or $ [ \omega] = \Image \omega $).

\subsection{Results for general $ p $}

We define two more involutions on
$G_{\mathbb{C}}=GL_{2p}(\mathbb{C})$
by
\begin{equation*}
	\varepsilon(g)\coloneqq 
	\overline{g},
	\quad
	\sigma(g)
	\coloneqq
	\Sigma_{p}^{-1}(g^{t})^{-1}\Sigma_{p},
\end{equation*}
where 
$\Sigma_{p}$ is given in 
\eqref{eq-def-Sigma-p-J-p}
and
$\overline{(\cdot)}$
is the complex conjugation for each entries.
Then,
we have
$
	\tau = \varepsilon \circ \sigma
$, 
%
and these three involutions $ \tau, \varepsilon $ and $ \sigma $ all commute with each other.

\begin{Fact}
\label{Fact-eqivalent-tau-fix-and-sigma-fix}
Let 
$G_{\mathbb{C}}, P_{\mathbb{C}}$
and
$B_{\mathbb{C}}$
be the groups defined in Section \ref{Section-Over-Complex},
and 
$\mathcal{O}$
a
$B_{\mathbb{C}}$-orbit 
on $G_{\mathbb{C}}/P_{\mathbb{C}}$.
Then,
we have
$
\mathcal{O}^{\sigma}\neq\emptyset
$
if and only if
$
\mathcal{O}^{\tau}\neq\emptyset.
$\end{Fact}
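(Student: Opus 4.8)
The plan is to translate everything into the Grassmannian model of Lemma~\ref{Lem-G_C/P_C=M/GL} and to exploit the relation $J_{p}=\sqrt{-1}\,\Sigma_{p}$ of \eqref{eq-def-Sigma-p-J-p} together with $\tau=\varepsilon\circ\sigma$. Writing a point of $G_{\mathbb{C}}/P_{\mathbb{C}}$ as $[M(C,D)]$, a direct computation gives $\omega^{t}\Sigma_{p}\omega=-C^{t}D+D^{t}C$ and $\omega^{*}J_{p}\omega=\sqrt{-1}\,(-C^{*}D+D^{*}C)$ for $\omega=M(C,D)$, so the fixed loci are
\[
(G_{\mathbb{C}}/P_{\mathbb{C}})^{\sigma}=\{[M(C,D)]\mid C^{t}D=D^{t}C\},\qquad
(G_{\mathbb{C}}/P_{\mathbb{C}})^{\tau}=\{[M(C,D)]\mid C^{*}D=D^{*}C\},
\]
i.e.\ the $\sigma$-fixed points are the Lagrangian subspaces for $\Sigma_{p}$ and the $\tau$-fixed points are the maximal isotropic subspaces for $J_{p}$ (recovering Lemma~\ref{Lem-G/P=LM/GL}). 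The key elementary observation is that on a real point, i.e.\ on $[M(C,D)]$ with $C,D$ real (equivalently on $(G_{\mathbb{C}}/P_{\mathbb{C}})^{\varepsilon}$), one has $C^{*}=C^{t}$ and $D^{*}=D^{t}$, so the two isotropy conditions literally coincide. Invariantly: since $\tau=\varepsilon\circ\sigma$ and the three involutions commute, $\varepsilon$ acts trivially on the real locus and hence $\sigma$ and $\tau$ agree there. Writing $\mathcal{L}_{\mathbb{R}}:=(G_{\mathbb{C}}/P_{\mathbb{C}})^{\sigma}\cap(G_{\mathbb{C}}/P_{\mathbb{C}})^{\varepsilon}=(G_{\mathbb{C}}/P_{\mathbb{C}})^{\tau}\cap(G_{\mathbb{C}}/P_{\mathbb{C}})^{\varepsilon}$ for this common real isotropic Grassmannian, every point of $\mathcal{L}_{\mathbb{R}}$ is simultaneously $\sigma$- and $\tau$-fixed.

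Given this, I would prove the Fact in the sharper form
\[
\mathcal{O}^{\sigma}\neq\emptyset\ \Longleftrightarrow\ \mathcal{O}\cap\mathcal{L}_{\mathbb{R}}\neq\emptyset\ \Longleftrightarrow\ \mathcal{O}^{\tau}\neq\emptyset .
\]
The implications $\mathcal{O}\cap\mathcal{L}_{\mathbb{R}}\neq\emptyset\Rightarrow\mathcal{O}^{\sigma}\neq\emptyset$ and $\mathcal{O}\cap\mathcal{L}_{\mathbb{R}}\neq\emptyset\Rightarrow\mathcal{O}^{\tau}\neq\emptyset$ are immediate from $\mathcal{L}_{\mathbb{R}}\subseteq(G_{\mathbb{C}}/P_{\mathbb{C}})^{\sigma}\cap(G_{\mathbb{C}}/P_{\mathbb{C}})^{\tau}$. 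The real content is the reverse upgrade: a $\sigma$-fixed point of $\mathcal{O}$ can be moved, \emph{inside} $\mathcal{O}$, to a point of $\mathcal{L}_{\mathbb{R}}$ (and symmetrically for $\tau$). To carry this out I note that $B_{\mathbb{C}}^{\sigma}=\{\diag(b,(b^{t})^{-1})\mid b\in B_{p}\}$ and $B_{\mathbb{C}}^{\tau}=B=\{\diag(b,(b^{*})^{-1})\mid b\in B_{p}\}$ are both isomorphic to the split solvable group $B_{p}$, on which $\varepsilon$ induces the complex conjugation with real form $B_{p}(\mathbb{R})$; hence the same computation as in Lemma~\ref{Lem-H^1(B_C,R)=1} gives $H^{1}(\mathbb{R},B_{\mathbb{C}}^{\sigma})=H^{1}(\mathbb{R},B_{\mathbb{C}}^{\tau})=1$. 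Since $\varepsilon$ commutes with $\sigma$, it stabilizes the nonempty subvariety $\mathcal{O}\cap(G_{\mathbb{C}}/P_{\mathbb{C}})^{\sigma}$, which is acted on by $B_{\mathbb{C}}^{\sigma}$; choosing an $\varepsilon$-stable $B_{\mathbb{C}}^{\sigma}$-orbit in it and using that every element of $(B_{\mathbb{C}}^{\sigma})^{-\varepsilon}$ (notation of \eqref{eq-def-G^{-gamma}}) is $\varepsilon$-conjugate to the identity in the sense of Definition~\ref{Def-sim_gamma}, the triviality of $H^{1}(\mathbb{R},B_{\mathbb{C}}^{\sigma})$ and Lemma~\ref{Lem-Galois-1st-isom-Galois-conj} produce an $\varepsilon$-fixed, hence $\sigma$- and $\tau$-fixed, point, i.e.\ a point of $\mathcal{O}\cap\mathcal{L}_{\mathbb{R}}$. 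The argument for $\mathcal{O}^{\tau}\neq\emptyset\Rightarrow\mathcal{O}\cap\mathcal{L}_{\mathbb{R}}\neq\emptyset$ is identical with $\sigma$ replaced by $\tau$.

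To justify the remaining geometric input I would invoke the classification of $B_{\mathbb{C}}\backslash G_{\mathbb{C}}/P_{\mathbb{C}}$ quoted from \cite{FN-23-AIII}: each orbit admits a representative $[M(C,D)]$ with $C,D$ real (indeed with entries in $\{0,1\}$), equivalently each orbit is $\varepsilon$-stable, since the orbits are separated by the dimensions of the intersections of $\Image\omega$ with the coordinate flags fixed by $B_{\mathbb{C}}$, and these invariants are defined over $\mathbb{R}$ and thus $\varepsilon$-invariant. This guarantees at the outset that $\mathcal{O}$ is $\sigma$-stable if and only if it is $\tau$-stable (because $\tau=\varepsilon\circ\sigma$ and $\varepsilon$ fixes every orbit), so the two sides of the claimed equivalence concern the same class of orbits. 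The step I expect to be the main obstacle is precisely the cohomological upgrade of the second paragraph: making rigorous that the $\varepsilon$-stable variety $\mathcal{O}\cap(G_{\mathbb{C}}/P_{\mathbb{C}})^{\sigma}$ genuinely contains an $\varepsilon$-stable $B_{\mathbb{C}}^{\sigma}$-orbit — most cleanly by checking that it is a \emph{single} $B_{\mathbb{C}}^{\sigma}$-orbit — after which the triviality of $H^{1}(\mathbb{R},B_{\mathbb{C}}^{\sigma})$ closes the argument through the exact sequence of Fact~\ref{Fact-Galois-exact-seq-long}.
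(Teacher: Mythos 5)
Your setup is sound: the identification of the fixed loci in the Grassmannian model, the observation that every $B_{\mathbb{C}}$-orbit is $\varepsilon$-stable (because the representatives of \cite[Thm.~2.2~(1)]{FN-23-AIII} have integer entries), and the use of $\tau=\varepsilon\circ\sigma$ are all correct --- and these are exactly the ingredients the paper also uses. The gap is precisely where you yourself predicted it, and it is not a technicality but the entire mathematical content of the statement. You give no argument that the $\varepsilon$-stable set $\mathcal{O}\cap(G_{\mathbb{C}}/P_{\mathbb{C}})^{\sigma}$ contains an $\varepsilon$-stable $B_{\mathbb{C}}^{\sigma}$-orbit: $\varepsilon$ merely permutes these orbits, and nothing you say rules out that it does so without fixing any. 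Your proposed repair --- showing $\mathcal{O}\cap(G_{\mathbb{C}}/P_{\mathbb{C}})^{\sigma}$ is a \emph{single} $B_{\mathbb{C}}^{\sigma}$-orbit --- is exactly the kind of statement the paper does not prove but imports: the paper's own proof quotes the arguments of \cite[Prop.~4.5]{FN-21-embedding} to conclude that $\mathcal{O}^{\sigma}\neq\emptyset$ (resp.\ $\mathcal{O}^{\tau}\neq\emptyset$) already forces the standard integer representative to satisfy $\sigma([\omega])=[\omega]$ (resp.\ $\tau([\omega])=[\omega]$), after which $\varepsilon([\omega])=[\omega]$ and $\tau=\varepsilon\circ\sigma$ finish the proof in two lines. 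So the one step you leave open is the one step that cannot be left open.

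Moreover, even granting an $\varepsilon$-stable orbit $B_{\mathbb{C}}^{\sigma}\cdot x$, your cohomological conclusion misapplies the appendix. Writing $\varepsilon(x)=b\cdot x$, the relation one gets is $\varepsilon(b)b\in(B_{\mathbb{C}}^{\sigma})_{x}$, not $\varepsilon(b)b=e$; so $b$ need not lie in $(B_{\mathbb{C}}^{\sigma})^{-\varepsilon}$ and Lemma \ref{Lem-Galois-1st-isom-Galois-conj} does not apply to it. Nor can you invoke Fact \ref{Fact-Galois-exact-seq} or \eqref{eq-Fact-hom-sp=1st-Galois}: identifying the orbit with $B_{\mathbb{C}}^{\sigma}/(B_{\mathbb{C}}^{\sigma})_{x}$ \emph{as a variety over $\mathbb{R}$} presupposes an $\varepsilon$-fixed base point, which is what you are trying to produce --- the argument is circular. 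The true obstruction to a real point on an $\varepsilon$-stable orbit involves the stabilizer (for the open orbit of Table \ref{Fig-list} the relevant stabilizer $(B_{\mathbb{C}}^{\sigma})_{x}$ is $\{\pm1\}^{2}$, so it is not trivial) and is of $H^{2}$-type; $H^{1}(\mathbb{R},B_{\mathbb{C}}^{\sigma})=1$ alone does not kill it. One could close this with a theorem on real points of homogeneous spaces under split solvable $\mathbb{R}$-groups, but that theorem is neither in the paper's toolkit nor supplied by you. In short, your first and third paragraphs rederive what the paper also observes, while your second paragraph, where the theorem actually lives, is a program rather than a proof; and the moment you fill that hole by citing \cite[Prop.~4.5]{FN-21-embedding}, your argument collapses into the paper's.
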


\begin{proof}
We choose a representative $ \omega $ of the orbit $ \mathcal{O} $ as given in \cite[Thm.~2.2 (1)]{FN-23-AIII}.  
Note that 
$
\mathcal{O}^{\sigma}\neq\emptyset
$
if and only if 
$ \sigma([\omega]) = [\omega] \in G_{\mathbb{C}}/P_{\mathbb{C}} $,  
which follows from the arguments in the proof of \cite[Prop.~4.5]{FN-21-embedding}.  
In the same way, we see that 
$
\mathcal{O}^{\tau}\neq\emptyset
$
if and only if
$ \tau([\omega]) = [\omega] $ (again by the same arguments in the proof of \cite[Prop.~4.5]{FN-21-embedding}).  
On the other hand,
all representatives $ \{ \omega \} $
given 
in 
\cite[Thm.~2.2 (1)]{FN-23-AIII}
have integer entries,
thus
they are fixed by $\varepsilon$.
Therefore $ \sigma([\omega]) = [\omega] $ if and only if $ \tau([\omega]) = [\omega] $ 
by 
the equality
$
	\tau = \varepsilon \circ \sigma
$, which proves the lemma.
\end{proof}

\subsection{The case of $p=2$}
In this section,
we specialize to the case $p=2$.

\begin{Fact}\label{Fact-list-of-sigma-fixed-p=2}
Let 
$G_{\mathbb{C}}, P_{\mathbb{C}}$
and
$B_{\mathbb{C}}$
be the groups defined in Section \ref{Section-Over-Complex},
and suppose that $p=2$.
Then,
all
representatives 
of
$B_{\mathbb{C}}$-orbits
on
$	\operatorname{M}_{2p,p}^{\circ}/GL_{p}(\mathbb{C})
	\simeq G_{\mathbb{C}}/P_{\mathbb{C}}$
given in
	\cite[Thm.~2.2 (1)]{FN-23-AIII}
with a $\tau$-fixed point
are listed in
the second entry from the left of
Table  \ref{Fig-list},
where we use the identification in 
Lemma \ref{Lem-G_C/P_C=M/GL}
and abbreviate
$[\omega] \in \operatorname{M}_{2p,p}^{\circ}/GL_{p}(\mathbb{C})$ to $\omega \in \operatorname{M}_{2p,p}^{\circ}$.
\end{Fact}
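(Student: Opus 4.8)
The plan is to convert the abstract condition ``$\mathcal{O}$ has a $\tau$-fixed point'' into an explicit, $GL_{p}(\mathbb{C})$-invariant linear-algebra criterion on the matrix $M(C,D)$, and then to evaluate that criterion on the finite list of representatives furnished by \cite[Thm.~2.2 (1)]{FN-23-AIII} for $p=2$. The key step is to make the action of $\sigma$ on $G_{\mathbb{C}}/P_{\mathbb{C}}\simeq\operatorname{M}_{2p,p}^{\circ}/GL_{p}(\mathbb{C})$ completely explicit. Since $P_{\mathbb{C}}$ is the stabilizer of $V_{0}=\mathbb{C}^{p}\oplus 0$, which is Lagrangian for the symplectic form $\langle u,v\rangle_{\sigma}=u^{t}\Sigma_{p}v$ (note $\Sigma_{p}^{t}=-\Sigma_{p}$), the involution $\sigma$ induces on the Grassmannian the map sending a subspace $W$ to its symplectic complement $W^{\perp_{\sigma}}$. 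Concretely, for $[\omega]$ with $W=\Image\omega$, a short computation using $\sigma(g)=\Sigma_{p}^{-1}(g^{t})^{-1}\Sigma_{p}$ together with the contragredient identity $(g^{t})^{-1}V_{0}^{\perp}=(gV_{0})^{\perp}$ for the standard pairing yields $\sigma(W)=W^{\perp_{\sigma}}$. Hence $\sigma([\omega])=[\omega]$ if and only if $W$ is Lagrangian, i.e. $\omega^{t}\Sigma_{p}\omega=0$, equivalently $C^{t}D=D^{t}C$ for $\omega=M(C,D)$. This condition is manifestly unchanged under $\omega\mapsto\omega g$ with $g\in GL_{p}(\mathbb{C})$, so it is well defined on the quotient and depends only on the orbit.

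Combining this criterion with Fact \ref{Fact-eqivalent-tau-fix-and-sigma-fix} and the equivalence $\mathcal{O}^{\sigma}\neq\emptyset\iff\sigma([\omega])=[\omega]$ recorded in its proof (which rests on \cite[Prop.~4.5]{FN-21-embedding}), the orbit $\mathcal{O}$ with representative $\omega=M(C,D)$ has a $\tau$-fixed point precisely when $C^{t}D=D^{t}C$. It is worth emphasizing that the \emph{symmetric} condition $C^{t}D=D^{t}C$ attached to $\sigma$ replaces the \emph{Hermitian} condition $C^{*}D=D^{*}C$ that cuts out $\operatorname{LM}_{2p,p}^{\circ}$ for $\tau$; the two coincide on the representatives of the list exactly because those $\omega$ have integer entries and are therefore $\varepsilon$-fixed, which is the content of the identity $\tau=\varepsilon\circ\sigma$ used in Fact \ref{Fact-eqivalent-tau-fix-and-sigma-fix}.

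Finally I would specialize to $p=2$ and compute $C^{t}D-D^{t}C$ for each of the finitely many $4\times 2$ representatives $M(C,D)$ in the left column of Table \ref{Fig-list}; those for which it vanishes are exactly the ones recorded in the second column. The only real obstacle is bookkeeping rather than conceptual: the enumeration is a routine but lengthy finite check, and one must use a fixed normalized representative of each $GL_{2}(\mathbb{C})$-orbit so that the (orbit-invariant) criterion is applied uniformly across the whole list. Once the criterion $C^{t}D=D^{t}C$ is in place, no individual case presents any difficulty.
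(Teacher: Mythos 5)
Your derivation of the explicit fixed-point criterion is correct, and it is a genuinely different route from the paper's: where the paper invokes the necessary-and-sufficient condition of \cite[Thm.~4.6]{FN-21-embedding} as a black box, you recover it directly by observing that $\sigma$ acts on the Grassmannian as the symplectic complement $W\mapsto W^{\perp_\sigma}$ with respect to $\Sigma_p$, so that $\sigma([\omega])=[\omega]$ if and only if $\Image\omega$ is Lagrangian, i.e.\ $\omega^{t}\Sigma_p\omega=0$, i.e.\ $C^{t}D=D^{t}C$. This computation is sound, it is manifestly invariant under the right $GL_p(\C)$-action, and you correctly import from the proof of Fact \ref{Fact-eqivalent-tau-fix-and-sigma-fix} (ultimately \cite[Prop.~4.5]{FN-21-embedding}) the nontrivial fact that $\mathcal{O}^{\sigma}\neq\emptyset$ is equivalent to $\sigma$-fixedness of the distinguished representative; without that ingredient, Lagrangian-ness of one representative would only give one implication.

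However, the final step has two genuine gaps. First, as described in your last paragraph it is circular: you propose to evaluate $C^{t}D-D^{t}C$ on the matrices appearing in Table \ref{Fig-list}, but that table lists only the ten orbits which \emph{do} admit $\tau$-fixed points, so this check can prove at most that those ten qualify. The Fact is an exhaustiveness claim, so the criterion must be evaluated on the \emph{entire} list of orbit representatives for $p=2$ from \cite[Thm.~2.2 (1), Ex.~2.4 (a)]{FN-23-AIII} --- which contains further orbits not in the table --- and one must verify that all of those fail. Second, you never address the convention mismatch that the paper resolves in Remark \ref{Remark-Figure-FN}: the representatives of \cite{FN-23-AIII} are adapted to the Borel $B_p\times B_p$, whereas $B_{\mathbb{C}}$ here is $B_p\times\overline{B}_p$, so each representative must be twisted by the longest Weyl group element before your criterion is applied, and the criterion is \emph{not} invariant under this twist. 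Concretely, the coordinate subspace $\langle e_1,e_3\rangle$, i.e.\ the class of $M(C,D)$ with $C=\left(\begin{smallmatrix}1&0\\0&0\end{smallmatrix}\right)$, $D=\left(\begin{smallmatrix}0&1\\0&0\end{smallmatrix}\right)$, is a $(B_p\times B_p)$-fixed point and hence necessarily occurs among the representatives of \cite{FN-23-AIII}; it fails your criterion, since $e_1^{t}\Sigma_p e_3=-1\neq 0$. Yet the corresponding $(B_p\times\overline{B}_p)$-orbit is the one through the twisted point $\langle e_1,e_4\rangle$, which \emph{is} Lagrangian and appears in Table \ref{Fig-list} as $M\left(\left(\begin{smallmatrix}1&0\\0&0\end{smallmatrix}\right),\left(\begin{smallmatrix}0&0\\0&1\end{smallmatrix}\right)\right)$. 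Applying your criterion to the untwisted list would therefore produce a set of orbits that does not match the table: the bookkeeping you dismiss as routine is exactly where the argument can go wrong.
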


\begin{proof}
	The necessary and sufficient condition 
	for 
	a
	$B_{\mathbb{C}}$-orbit 
	on $G_{\mathbb{C}}/P_{\mathbb{C}}$
	to
	have a $\sigma$-fixed point
	is given in \cite[Thm.~4.6]{FN-21-embedding}.
	Thus,
	it is sufficient to apply this criterion 
	to 
	\cite[Ex.~2.4 (a)]{FN-23-AIII}
	by considering the equivalence 
	of Fact \ref{Fact-eqivalent-tau-fix-and-sigma-fix}
    (see also Remark \ref{Remark-Figure-FN} below).
\end{proof}

\begin{Remark}
\label{Remark-Figure-FN}
	In \cite{FN-23-AIII},
	the Borel subgroup of $H_{\mathbb{C}} \simeq GL_{p}(\mathbb{C})\times GL_{p}(\mathbb{C})$ is defined by
	$B_{p} \times B_{p}$
	while
	$B_{\mathbb{C}}$
	in this article 
	is defined by 
	$B_{p}\times \overline{B}_{p}$.  
	Thus,
	in order to apply
	the classification given in \cite[Ex.~2.4 (a)]{FN-23-AIII},
	it must be twisted by the longest element of the Weyl group of $1\times GL_{p}(\mathbb{C}) \subset H_{\mathbb{C}}$.
	More precisely,
	if 
	$\omega=
	M(
	\tau_{1},
	\tau_{2}	
	)
	$ 
	is the representative 
	(see
	\eqref{eq-def-LM-and-M(C,D)} for the notation) 
	of some
	$(B_{p}\times B_{p})$-orbit 
	corresponding to
	a 
	parameter  
	of 
	\cite[Fig.~2]{FN-23-AIII},
    then,
	$\omega=
	M(
	\tau_{1},
	w_{0}\tau_{2}w_{0}^{-1}	
	)
	$
	corresponds to 
	our $(B_{p}\times \overline{B}_{p})$-orbit,
	where 
	$\omega\in \mathfrak{T}_{(p,p),p}$
	(see \cite[Sect.~2.1]{FN-23-AIII} for the notation $\mathfrak{T}_{(p,p),p}$ and we identify $\omega$ with $[\omega] \in \operatorname{M}_{2p,p}^{\circ}/GL_{p}(\mathbb{C})
	\simeq  G_{\mathbb{C}}/P_{\mathbb{C}}$ by Lemma \ref{Lem-G_C/P_C=M/GL})
	and
	$w_{0}$ is the longest element of the Weyl group of $GL_{p}(\mathbb{C})$ 
which interchanges $ i $ and $ p - i + 1 \; (1\leq i \leq p)$.
\skipover{
	\begin{equation*}
		w_{0}
		\coloneqq
		\begin{pmatrix}
			& & 1 \\
			& \rotatebox{45}{$\dots$} &
			\\
			1 & &
		\end{pmatrix}.
	\end{equation*}
}	
	Moreover,
	in 
	\cite[Fig.~2]{FN-23-AIII},
	``$\dim$'' represents the dimension of  $H_{\mathbb{C}}$-orbits in 
	the double flag variety
	$H_{\mathbb{C}}/B_{\mathbb{C}}\times G_{\mathbb{C}}/P_{\mathbb{C}}$
	while,
	in Table  \ref{Fig-list},
	``$\dim$'' represents that of
	$B_{\mathbb{C}}$-orbits in
	$G_{\mathbb{C}}/P_{\mathbb{C}}$.
	
	Finally,
	there exists one error
	in 
	\cite[Fig.~2]{FN-23-AIII}.
	The central figure of \cite[Fig.~2]{FN-23-AIII}, 
in the row of
the $4$-dimensional orbits, should be replaced with
$
\vcenter{
		\xymatrix@C=0pt@R=0pt{
		\bullet 
		&
		\text{\textcircled{$\bullet$}}
		\phantom{.} 
		\\
		\bullet
		&
		\text{\textcircled{$\bullet$}}.
		}
}
$
\end{Remark}

\begin{Definition}\label{Def-Omega}
We write $\Omega \subset \operatorname{M}_{2p,p}^{\circ}$
for a set of all $\omega \in \operatorname{M}_{2p,p}^{\circ}$
listed in the second entry from the left of Table  \ref{Fig-list}.  
Hence $ \Omega $ is the set of complete representatives 
of $B_{\mathbb{C}}$-orbits in  
$\operatorname{M}_{2p,p}^{\circ}/GL_{p}(\mathbb{C}) \simeq G_{\mathbb{C}}/P_{\mathbb{C}}$
having a $\tau$-fixed point
by Fact \ref{Fact-list-of-sigma-fixed-p=2}.  
\end{Definition}

Recall that, for $ \omega \in \operatorname{M}_{2p,p}^{\circ} $, 
we denote the corresponding point $ \omega GL_{p}(\mathbb{C}) \in \operatorname{M}_{2p,p}^{\circ}/GL_{p}(\mathbb{C}) $ by $ [\omega] $. 

\begin{Corollary}
\label{Cor-B-G/P=coprod-H^1}
Let  
$G, P$ and $B$
be the groups defined in Section \ref{Section-Over-Real}
and assume $p=2$.
Then, we have
\begin{equation}
	B \backslash G/P
	\simeq 
	\coprod
	_{\omega \in \Omega }
	H^{1}(\mathbb{R},(B_{\mathbb{C}})_{[\omega]}),
\label{eq-Cor-p=2-B-G-P=H^{1}}
\end{equation}
where
$(B_{\mathbb{C}})_{[\omega]}$
is the stabilizer of
$[\omega] \in \operatorname{M}_{2p,p}^{\circ}/GL_{p}(\mathbb{C})$ 
in $B_{\mathbb{C}}$.
\end{Corollary}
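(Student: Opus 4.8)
The plan is to identify $B\backslash G/P$ with the orbit space of $B=B_{\mathbb{C}}^{\tau}$ acting on the real locus $(G_{\mathbb{C}}/P_{\mathbb{C}})^{\tau}$, and then to run the standard nonabelian Galois cohomology argument one complex orbit at a time. First I would use Corollary \ref{Cor-(G/P)^{tau}=G^{tau}/P^{tau}} to write $G/P=(G_{\mathbb{C}}/P_{\mathbb{C}})^{\tau}$, so that $B\backslash G/P$ is precisely the set of $B$-orbits on the $\tau$-fixed points. Every $\tau$-fixed point lies in a unique $B_{\mathbb{C}}$-orbit $\mathcal{O}$, and such an orbit contributes only if $\mathcal{O}^{\tau}\neq\emptyset$; hence the fixed locus partitions as $\coprod_{\mathcal{O}:\,\mathcal{O}^{\tau}\neq\emptyset}\mathcal{O}^{\tau}$, and correspondingly $B\backslash G/P=\coprod_{\mathcal{O}:\,\mathcal{O}^{\tau}\neq\emptyset}\bigl(B\backslash\mathcal{O}^{\tau}\bigr)$. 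By Fact \ref{Fact-list-of-sigma-fixed-p=2} and Definition \ref{Def-Omega}, the orbits with $\mathcal{O}^{\tau}\neq\emptyset$ are exactly those admitting a representative $\omega\in\Omega$, and by construction each such $[\omega]$ is a $\tau$-fixed base point of its orbit $\mathcal{O}_{\omega}=B_{\mathbb{C}}\cdot[\omega]$.

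The heart of the argument is the computation of $B\backslash\mathcal{O}_{\omega}^{\tau}$ for fixed $\omega\in\Omega$. Since $B_{\mathbb{C}}$ and $[\omega]$ are both $\tau$-stable, the stabilizer $(B_{\mathbb{C}})_{[\omega]}$ is a $\tau$-stable subgroup and the orbit map gives a $\tau$-equivariant isomorphism $\mathcal{O}_{\omega}\simeq B_{\mathbb{C}}/(B_{\mathbb{C}})_{[\omega]}$. Feeding the short exact sequence $1\to (B_{\mathbb{C}})_{[\omega]}\to B_{\mathbb{C}}\to\mathcal{O}_{\omega}\to 1$ into the long exact sequence of Galois cohomology (Fact \ref{Fact-Galois-exact-seq-long}, cf.~\eqref{eq-Fact-long-ex-seq-Galois}) yields the exact sequence of pointed sets
$$B\longrightarrow \mathcal{O}_{\omega}^{\tau}\xrightarrow{\ \delta\ } H^{1}(\mathbb{R},(B_{\mathbb{C}})_{[\omega]})\longrightarrow H^{1}(\mathbb{R},B_{\mathbb{C}}).$$
The usual twisting interpretation of the connecting map $\delta$ shows that its fibers are exactly the $B$-orbits on $\mathcal{O}_{\omega}^{\tau}$, so $B\backslash\mathcal{O}_{\omega}^{\tau}$ injects into $H^{1}(\mathbb{R},(B_{\mathbb{C}})_{[\omega]})$ with image equal to the kernel of the last arrow.

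To finish I would invoke Lemma \ref{Lem-H^1(B_C,R)=1}, which gives $H^{1}(\mathbb{R},B_{\mathbb{C}})=1$; this collapses the kernel above to all of $H^{1}(\mathbb{R},(B_{\mathbb{C}})_{[\omega]})$, so that $B\backslash\mathcal{O}_{\omega}^{\tau}\simeq H^{1}(\mathbb{R},(B_{\mathbb{C}})_{[\omega]})$. Taking the disjoint union over $\omega\in\Omega$ then produces \eqref{eq-Cor-p=2-B-G-P=H^{1}}. I expect the only point requiring care to be the $\tau$-fixedness of the base point $[\omega]$, which is exactly what guarantees that the nonabelian connecting map $\delta$ is defined and that its fibers compute the real orbits; this is precisely the content of membership in $\Omega$. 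The triviality of $H^{1}(\mathbb{R},B_{\mathbb{C}})$ is what makes the answer clean, turning each local count into the full cohomology set of the stabilizer rather than a proper subset of it.
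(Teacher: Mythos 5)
Your proof is correct and is essentially the paper's own argument: the paper merely packages your orbit-by-orbit computation (partitioning the $\tau$-fixed locus into the sets $\mathcal{O}_{\omega}^{\tau}$, identifying $\mathcal{O}_{\omega}\simeq B_{\mathbb{C}}/(B_{\mathbb{C}})_{[\omega]}$ via the $\tau$-fixed base point $[\omega]$, and applying the exact sequence \eqref{eq-Fact-long-ex-seq-Galois} together with $H^{1}(\mathbb{R},B_{\mathbb{C}})=1$) into the general Corollary \ref{Cor-orbit-decomp-by-Galois-cohomology} of the appendix, and then cites that corollary together with Lemma \ref{Lem-H^1(B_C,R)=1} and Corollary \ref{Cor-(G/P)^{tau}=G^{tau}/P^{tau}}. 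One small correction: the sequence you invoke is Fact \ref{Fact-Galois-exact-seq} (valid for any $\tau$-stable subgroup), not Fact \ref{Fact-Galois-exact-seq-long}, which requires the subgroup $(B_{\mathbb{C}})_{[\omega]}$ to be normal; since your argument only uses the portion ending at $H^{1}(\mathbb{R},B_{\mathbb{C}})$, the former suffices and nothing breaks.
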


\begin{proof}
Since $
H^{1}(\mathbb{R},B_{\mathbb{C}})=1
$
by Lemma \ref{Lem-H^1(B_C,R)=1}, 
we use Corollary
\ref{Cor-orbit-decomp-by-Galois-cohomology}
to obtain 
$
	B_{\mathbb{C}}^{\tau}
	\backslash
	(G_{\mathbb{C}}/P_{\mathbb{C}})^{\tau}
	\simeq 
 \linebreak
	\coprod_{\omega\in\Omega}
	H^{1}(\mathbb{R},(B_{\mathbb{C}})_{[\omega]})
$.  
This completes the proof by Corollary \ref{Cor-(G/P)^{tau}=G^{tau}/P^{tau}}.
\end{proof}

\section{Proof of Theorem \ref{Them-Main-in-intro}}
\label{Section-proof-of-main-thm}

In this section,
we give a proof of Theorem \ref{Them-Main-in-intro} 
by
computing the right-hand side 
of 
\eqref{eq-Cor-p=2-B-G-P=H^{1}}.
It is sufficient to prove that 
$ H^{1}(\mathbb{R},(B_{\mathbb{C}})_{[\omega]}) $ 
corresponds bijectively to the representatives of
$B$-orbits in $G/P\simeq \operatorname{LM}_{2p,p}^{\circ}/GL_{p}(\mathbb{C})$
given in Table  \ref{Fig-list}.

We only prove it
in the case where 
$
	\omega = \omega_{0}
	\coloneqq
	M\left(
	\begin{pmatrix}
		1 & 0 \\
		0 & 0 \\
	\end{pmatrix}
	,
	\begin{pmatrix}
		1 & 0 \\
		0 & 1
	\end{pmatrix}
	\right)
$ 
(we use the notation \eqref{eq-def-LM-and-M(C,D)}),
which is the fourth element from the bottom
of Table  \ref{Fig-list}.
The other cases 
can be proved similarly.  

First,
we calculate
\begin{align*}
	&
	\diag\left(
	\begin{pmatrix}
		t_{1} & n_{1}  \\
		0     & s_{1}  \\
	\end{pmatrix}
	,
	\begin{pmatrix}
		t_{2} & 0 \\
		n_{2} & s_{2}
	\end{pmatrix}
	\right)
	\omega_{0}
	\\
	=&
	M
	\left(
	\begin{pmatrix}
		t_{1}t_{2}^{-1} & 0 \\
		0 & 0 \\
	\end{pmatrix}
	,
	\begin{pmatrix}
		1 & 0 \\
		0 & 1
	\end{pmatrix}
	\right)
	\begin{pmatrix}
		t_{2} & 0 \\
		n_{2} & s_{2}
	\end{pmatrix}
	\in 
	M
	\left(
	\begin{pmatrix}
		t_{1}t_{2}^{-1} & 0 \\
		0 & 0 \\
	\end{pmatrix}
	,
	\begin{pmatrix}
		1 & 0 \\
		0 & 1
	\end{pmatrix}
	\right)
	GL_{p}(\mathbb{C}).
\end{align*}
This implies that
the stabilizer 
$
B_{\mathbb{C}}'
\coloneqq
(B_{\mathbb{C}})_{[\omega_0]}$
at $ [\omega_0] = \omega_{0} GL_{p}(\mathbb{C})
\in \operatorname{M}_{2p,p}^{\circ}/GL_{p}(\mathbb{C})$
of $B_{\mathbb{C}}$
is equal to what is given in Table  \ref{Fig-list}.

Next, in order to determine the Galois cohomology of $
(B_{\mathbb{C}})_{[\omega_0]}
$,
we calculate 
by using Lemma \ref{Lem-formulae-of-tau-and-H},
\begin{equation*}
    B_{\mathbb{C}}'^{-\tau}
    =
    \left\{
    \diag
    \left(
    \begin{pmatrix}
        x & n \\
        0 & s 
    \end{pmatrix}
    ,
    \begin{pmatrix}
        x & 0 \\
        \overline{n} & \overline{s}
    \end{pmatrix}
    \right)
    \middle|
    \begin{array}{c}
         x \in GL_{1}(\mathbb{R})
         \\
         s \in GL_{1}(\mathbb{C}), 
         \:
         n \in \mathbb{C}
    \end{array}
    \right\}.
\end{equation*}
The similar arguments in Lemma \ref{Lem-H^1(B_C,R)=1} tell that 
\begin{equation}
H^{1}(\mathbb{R},B_{\mathbb{C}}')
        \simeq
        B_{\mathbb{C}}'^{-\tau}/\underset{\tau}{\sim} \;\;\xrightarrow{\;\sim\;\;} \{\pm1\}, 
\qquad 
    \diag
    \left(
    \begin{pmatrix}
        x & n \\
        0 & s 
    \end{pmatrix}
    ,
    \begin{pmatrix}
        x & 0 \\
        \overline{n} & \overline{s}
    \end{pmatrix}
    \right) \mapsto x/|x|
\skipover{
	\begin{array}{ccc}
	H^{1}(\mathbb{R},T_{\mathbb{C}}')
	\simeq 
	T_{\mathbb{C}}'^{-\tau}/\underset{\tau}{\sim}
	&
	\xrightarrow{\;\sim\;\;}
	&
	\{\pm1\}
	\\
	\rotatebox{90}{$\in$}
	&&
	\rotatebox{90}{$\in$}
	\\
	T(x,s,x,\overline{s})
	&
	\mapsto 
	&
	\frac{|x|}{x},
	\end{array}
}
\label{eq-isom-H^{1}(T,R)=+-1}
\end{equation}
under
the identification
\eqref{eq-Lem-H^1-isom-conj}. Thus, we get
$
	H^{1}(\mathbb{R},B_{\mathbb{C}}')
	=\{\pm 1\}, 
$
which coincides with what is given in Table  \ref{Fig-list}.

Finally, we determine 
the 
orbits on 
$G/P\simeq \operatorname{LM}_{2p,p}^{\circ}/GL_{p}(\mathbb{C})$
corresponding
to each elements of 
$H^{1}(\mathbb{R},B_{\mathbb{C}}')
	\simeq \{\pm1\}$.
For $t_{1},t_{2},s_{1},s_{2} \in GL_{1}(\C)$,
we write
\begin{equation*}
    T(t_{1},t_{2},s_{1},s_{2})
    \coloneqq
    T(\diag(t_{1},t_{2}), \diag(s_{1},s_{2}) ),
\end{equation*}
where we use the notation defined in \eqref{eq-def-T-and-N}.
Then, by the definition of the map $ \delta $ in \eqref{eq-Fact-hom-sp=1st-Galois}
(and \eqref{eq-Lem-H(b,c)^{-1}}),
we have
\begin{equation}
\begin{aligned}
	\delta(
	B_{\mathbb{C}}^{\tau}T(-1,1,1,1)P_{\mathbb{C}})
	&=
	[T(-1,1,1,1)^{-1}
	\tau(T(-1,1,1,1))]
	_{\tau}
	\\
	&=
	[T(-1,1,1,1)
	T(1,1,-1,1)]_{\tau}
	=
	[T(-1,1,-1,1)]_{\tau},
\end{aligned}
\label{eq-Sect-CC-delta(T)=...}
\end{equation}
where $[\cdot]_{\tau}$ is defined in 
Definition \ref{Def-sim_gamma}.
The right-most term of
\eqref{eq-Sect-CC-delta(T)=...} 
corresponds
to
$-1$
under the isomorphism
\eqref{eq-isom-H^{1}(T,R)=+-1}.
Thus,
Equation \eqref{eq-Fact-hom-sp=1st-Galois} tells that 
the $B_{\mathbb{C}}^{\tau}$-orbit 
on
$
(G_{\mathbb{C}}/P_{\mathbb{C}})^{\tau}
=
G/P\simeq \operatorname{LM}_{2p,p}^{\circ}/GL_{p}(\mathbb{C})$
corresponding to
$-1\in H^{1}(\mathbb{R},B_{\mathbb{C}}')$
is generated by 
\begin{equation*}
	T(-1,1,1,1)\omega_{0} GL_{p}(\mathbb{C})
	=
	M
	\left(
	\begin{pmatrix}
		-1 & 0 \\
		0 & 0 
	\end{pmatrix}
	,
	\begin{pmatrix}
		1 & 0 \\
		0 & 1
	\end{pmatrix}
	\right)
	GL_{p}(\mathbb{C}).
\end{equation*}
Moreover,
the 
orbit on $G/P\simeq \operatorname{LM}_{2p,p}^{\circ}/GL_{p}(\mathbb{C})$
corresponding
to 
$1\in H^{1}(\mathbb{R},B_{\mathbb{C}}') $
is $ B \omega_{0} P $.
These coincides with the corresponding entries in Table  \ref{Fig-list}, which 
completes the proof.

\begin{table}[hptb]
\resizebox{.75\textwidth}{!}{
\(
\begin{array}{|c|c|c|c|c|}
\hline
\dim 
& 
\omega
& 
(B_{\mathbb{C}})_{[\omega]}
&
H^{1}(\mathbb{R},
(B_{\mathbb{C}})_{[\omega]}
)
&
\begin{array}{c}
\text{representatives of}
\\
B\backslash (B_{\mathbb{C}}/(B_{\mathbb{C}})_{[\omega]})^{\tau} 
\end{array}
\\	
\hline
4
& 
\begin{pmatrix}
	1 & 0 \\
	0 & 1 \\
	1 & 0 \\
	0 & 1 
\end{pmatrix}
&
\left\{
\begin{pmatrix}
	t & 0 & 0 & 0 \\
	0 & s & 0 & 0 \\
	0 & 0 & t & 0 \\
	0 & 0 & 0 & s
\end{pmatrix}
\right\}
&
\{\pm 1\}^{2}
&
\begin{pmatrix}
	\pm1 & 0 \\
	0 & \pm1 \\
	1 & 0 \\
	0 & 1 
\end{pmatrix}
\\
\hline
3
& 
\begin{pmatrix}
	1 & 0 \\
	0 & 1 \\
	1 & 0 \\
	0 & 0 
\end{pmatrix}
&
\left\{
\begin{pmatrix}
	t & 0 & 0 & 0 \\
	0 & s_{1} & 0 & 0 \\
	0 & 0 & t & 0 \\
	0 & 0 & 0 & s_{2} 
\end{pmatrix}
\right\}
&
\{\pm 1\}
&
\begin{pmatrix}
	1 & 0 \\
	0 & 1 \\
	\pm1 & 0 \\
	0 & 0 
\end{pmatrix}
\\
\hline
3
& 
\begin{pmatrix}
	0 & 1 \\
	1 & 0 \\
	1 & 0 \\
	0 & 1 
\end{pmatrix}
&
\left\{
\begin{pmatrix}
	t & n & 0 & 0 \\
	0 & s & 0 & 0 \\
	0 & 0 & s & 0 \\
	0 & 0 & n & t
\end{pmatrix}
\right\}
&
1
& 
\begin{pmatrix}
	0 & 1 \\
	1 & 0 \\
	1 & 0 \\
	0 & 1 
\end{pmatrix}
\\
\hline
3
& 
\begin{pmatrix}
	0 & 0 \\
	0 & 1 \\
	1 & 0 \\
	0 & 1 
\end{pmatrix}
&
\left\{
\begin{pmatrix}
	t_{1} & 0 & 0 & 0 \\
	0 & s & 0 & 0 \\
	0 & 0 & t_{2} & 0 \\
	0 & 0 & 0 & s 
\end{pmatrix}
\right\}
&
\{\pm 1\}
&
\begin{pmatrix}
	0 & 0 \\
	0 & \pm1 \\
	1 & 0 \\
	0 & 1 
\end{pmatrix}
\\
\hline
2
& 
\begin{pmatrix}
	0 & 0 \\
	0 & 1 \\
	1 & 0 \\
	0 & 0 
\end{pmatrix}
&
\left\{
\begin{pmatrix}
	t_{1} & 0 & 0 & 0 \\
	0 & s_{1} & 0 & 0 \\
	0 & 0 & t_{2} & 0 \\
	0 & 0 & 0 & s_{2} 
\end{pmatrix}
\right\}
&
1
&
\begin{pmatrix}
	0 & 0 \\
	0 & 1 \\
	1 & 0 \\
	0 & 0 
\end{pmatrix}
\\
\hline
1
& 
\begin{pmatrix}
	1 & 0 \\
	0 & 1 \\
	0 & 0 \\
	0 & 1 
\end{pmatrix}
&
\left\{
\begin{pmatrix}
	t_{1} & n_{1} & 0 & 0 \\
	0 & s & 0 & 0 \\
	0 & 0 & t_{2} & 0 \\
	0 & 0 & n_{2} & s 
\end{pmatrix}
\right\}
&
\{\pm 1\}
&
\begin{pmatrix}
	1 & 0 \\
	0 & 1 \\
	0 & 0 \\
	0 & \pm1 
\end{pmatrix}
\\
\hline
1
& 
\begin{pmatrix}
	1 & 0 \\
	0 & 0 \\
	1 & 0 \\
	0 & 1 
\end{pmatrix}
&
\left\{
\begin{pmatrix}
	t & n_{1} & 0 & 0 \\
	0 & s_{1} & 0 & 0 \\
	0 & 0 & t & 0 \\
	0 & 0 & n_{2} & s_{2} 
\end{pmatrix}
\right\}
&
\{\pm 1\}
&
\begin{pmatrix}
	\pm1 & 0 \\
	0 & 0 \\
	1 & 0 \\
	0 & 1 
\end{pmatrix}
\\
\hline
0
& 
\begin{pmatrix}
	1 & 0 \\
	0 & 1 \\
	0 & 0 \\
	0 & 0 
\end{pmatrix}
&
B_{\mathbb{C}}
&
1
& 
\begin{pmatrix}
	1 & 0 \\
	0 & 1 \\
	0 & 0 \\
	0 & 0 
\end{pmatrix}
\\
\hline
0
& 
\begin{pmatrix}
	1 & 0 \\
	0 & 0 \\
	0 & 0 \\
	0 & 1 
\end{pmatrix}
&
B_{\mathbb{C}}
&
1
& 
\begin{pmatrix}
	1 & 0 \\
	0 & 0 \\
	0 & 0 \\
	0 & 1 
\end{pmatrix}
\\
\hline
0
& 
\begin{pmatrix}
	0 & 0 \\
	0 & 0 \\
	1 & 0 \\
	0 & 1 
\end{pmatrix}
&
B_{\mathbb{C}}
&
1
& 
\begin{pmatrix}
	0 & 0 \\
	0 & 0 \\
	1 & 0 \\
	0 & 1 
\end{pmatrix}
\\
\hline
\end{array}	
\)
}

\bigskip
{\small
\caption{In this list,
we always assume that
$t,t_{1},t_{2},s,s_{1},s_{2} \in GL_{1}(\mathbb{C})$
and
$n,n_{1},n_{2}\in \mathbb{C}$.
Moreover,
we identify
$
G_{\mathbb{C}}/P_{\mathbb{C}} \simeq \operatorname{M}_{2p,p}^{\circ}/GL_{p}(\mathbb{C})$
(see Lemma \ref{Lem-G_C/P_C=M/GL})
and abbreviate
$[\omega] \in \operatorname{M}_{2p,p}^{\circ}/GL_{p}(\mathbb{C})$ to $\omega \in \operatorname{M}_{2p,p}^{\circ}$.\label{Fig-list}}
}
\end{table}

\appendix
\section{Galois cohomology}\label{Section-Galois-cohomolgy}

In this section,
we 
summarize
some of
basic results about 
the theory of the Galois cohomology,
which are used in this article.
Main references are
\cite{BS-GC,Serre-GC}.
\begin{Convention}
\label{Conv-defined-over-R}
	In this article,
	\textit{an algebraic group $(G_{\mathbb{C}},\gamma)$ defined over $\mathbb{R}$}
	means that
	$G_{\mathbb{C}}$ is a complex algebraic group with an action of the Galois group 
	$\operatorname{Gal}(\mathbb{C}/\mathbb{R})$ of the field extension $\mathbb{C}/\mathbb{R}$, and
	$\gamma\colon G_{\mathbb{C}}\to G_{\mathbb{C}}$ denotes the action of the nontrivial element of
	$\operatorname{Gal}(\mathbb{C}/\mathbb{R})$.
    Moreover, we say 
    \textit{$(H_{\mathbb{C}},\gamma)$ is a subgroup of $(G_{\mathbb{C}},\gamma)$ defined over $\mathbb{R}$}, 
    when $H_{\mathbb{C}}$ is an algebraic subgroup of $G_{\mathbb{C}}$ that is also $\gamma$-stable.
    Thus, 
    $(H_{\mathbb{C}},\gamma)$ is also an algebraic group defined over $\mathbb{R}$.
\end{Convention}

\begin{Definition}[{\cite[Sect.~1.2]{BS-GC},
\cite[Chap.~I, \S5, 5.1]{Serre-GC}}]
\label{Def-Galois-coh}
Let $(G_{\mathbb{C}},\gamma)$ be an algebraic group 
defined over $\mathbb{R}$.
We set
$
	Z^{1}(\mathbb{R},G_{\mathbb{C}})
	\coloneqq
	\{g\colon 
	\operatorname{Gal}(\mathbb{C}/\mathbb{R})
	\to 
	G_{\mathbb{C}}\mid 
	g\text{ satisfies }
	(*)
	\}
$,
where 
the condition $(*)$ is given by
\begin{equation*}
	(*)
	\qquad
	\forall x, y\in 
	\operatorname{Gal}(\mathbb{C}/\mathbb{R}), \quad
		g_{x y} = g_{x}\, x(g_{y}).
\end{equation*}
Moreover, we define an equivalence relation $\sim$ on $Z^{1}(\mathbb{R}, G_{\mathbb{C}})$ by
\begin{equation*}
	g \sim g'
	\overset{\text{def}}{\iff}
	\exists h\in G,\;
	\forall x\in 
	\operatorname{Gal}(\mathbb{C}/\mathbb{R}),
 \quad
	g_{x} = h^{-1} g'_{x}\, x(h)
\end{equation*}
and set
$
	H^{1}(\mathbb{R},G_{\mathbb{C}})
	\coloneqq
	Z^{1}(\mathbb{R},G_{\mathbb{C}})
	/
	\sim $.
\end{Definition}

\begin{Definition}
\label{Def-sim_gamma}
Let $(G_{\mathbb{C}},\gamma)$ be an algebraic group 
defined over $\mathbb{R}$.
Then,
we set
\begin{equation}
	G_{\mathbb{C}}^{-\gamma}
	\coloneqq 
	\{g\in G_{\mathbb{C}}\mid \gamma(g)=g^{-1}\}.
\label{eq-def-G^{-gamma}}
\end{equation}
Moreover,
we define an equivalence relation $\underset{\gamma}{\sim}$ on $G_{\mathbb{C}}^{-\gamma}$ by
$\gamma$-conjugation:
\begin{equation*}
	g \underset{\gamma}{\sim} g'
	\quad
	\overset{\text{def}}{\iff}
	\quad
	\exists h\in G_{\mathbb{C}}
	\text{ such that }
	g=h^{-1}g'\gamma(h).
\end{equation*}
We also write $[g]_{\gamma}$
for the equivalence class of $g\in G_{\mathbb{C}}^{-\gamma}$
under the relation $\underset{\gamma}{\sim}$.
\end{Definition}

\begin{Lemma}
\label{Lem-Galois-1st-isom-Galois-conj}
Let $(G_{\mathbb{C}},\gamma)$ be an algebraic group defined over $\mathbb{R}$.
Then,
we have an isomorphism:
\begin{equation}
	H^{1}(\mathbb{R},G_{\mathbb{C}})
	\xrightarrow{\;\sim\;\;}
	G_{\mathbb{C}}^{-\gamma}
	/
	\underset{\gamma}{\sim}.
\label{eq-Lem-H^1-isom-conj}
\end{equation}	
\end{Lemma}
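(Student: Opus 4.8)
The plan is to exploit the fact that $\operatorname{Gal}(\mathbb{C}/\mathbb{R})$ is the two-element group $\{1,c\}$, so that a cocycle is completely determined by its value on the single nontrivial element $c$, whose action on $G_{\mathbb{C}}$ is $\gamma$. Concretely, I would define the map in \eqref{eq-Lem-H^1-isom-conj} by sending the class of a cocycle $g \in Z^{1}(\mathbb{R},G_{\mathbb{C}})$ to the class $[g_{c}]_{\gamma}$ of its value $g_{c} \in G_{\mathbb{C}}$, and then check that this prescription is a well-defined bijection.

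First I would analyze the cocycle condition $(*)$. Evaluating $g_{xy}=g_{x}\,x(g_{y})$ at $x=y=1$ forces $g_{1}=g_{1}^{2}$, hence $g_{1}=e$; the pairs $(x,y)=(1,c)$ and $(c,1)$ are then automatically satisfied, while $(x,y)=(c,c)$ gives $e=g_{1}=g_{c}\,\gamma(g_{c})$, i.e.\ $\gamma(g_{c})=g_{c}^{-1}$. Thus a map $g$ satisfies $(*)$ if and only if $g_{1}=e$ and $g_{c}\in G_{\mathbb{C}}^{-\gamma}$, so evaluation $g\mapsto g_{c}$ is a bijection $Z^{1}(\mathbb{R},G_{\mathbb{C}})\xrightarrow{\;\sim\;}G_{\mathbb{C}}^{-\gamma}$.

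Next I would match the two equivalence relations. Two cocycles satisfy $g\sim g'$ precisely when there is $h\in G_{\mathbb{C}}$ with $g_{x}=h^{-1}g'_{x}\,x(h)$ for all $x$; the instance $x=1$ reduces to $e=h^{-1}eh$ and is vacuous, while the instance $x=c$ reads $g_{c}=h^{-1}g'_{c}\,\gamma(h)$, which is exactly the relation $g_{c}\underset{\gamma}{\sim}g'_{c}$ of Definition \ref{Def-sim_gamma}. Hence the bijection $g\mapsto g_{c}$ carries $\sim$ onto $\underset{\gamma}{\sim}$ and descends to the claimed bijection $H^{1}(\mathbb{R},G_{\mathbb{C}})\xrightarrow{\;\sim\;}G_{\mathbb{C}}^{-\gamma}/\underset{\gamma}{\sim}$.

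The computations are all routine; the only point requiring care is the bookkeeping in the second step, where one must verify that the full cocycle identity over all four ordered pairs in $\operatorname{Gal}(\mathbb{C}/\mathbb{R})\times\operatorname{Gal}(\mathbb{C}/\mathbb{R})$ collapses to exactly the single constraint $\gamma(g_{c})=g_{c}^{-1}$ together with the normalization $g_{1}=e$, with no further hidden relations. Since $H^{1}$ is only a pointed set, I would also record that both sides carry a distinguished basepoint, namely the class of the trivial cocycle $e$ on the left and $[e]_{\gamma}$ on the right, and that the map is basepoint-preserving, so that it is an isomorphism of pointed sets.
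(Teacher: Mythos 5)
Your proposal is correct and follows essentially the same route as the paper's own proof: evaluation of cocycles at the nontrivial element of $\operatorname{Gal}(\mathbb{C}/\mathbb{R})$, after observing that the cocycle condition $(*)$ forces $g_{1}=e$ and $g_{c}\in G_{\mathbb{C}}^{-\gamma}$, and that the coboundary relation matches $\underset{\gamma}{\sim}$. You simply spell out the case-by-case verification that the paper leaves implicit.
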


\begin{proof}
For any
$g\in Z^{1}(\mathbb{R},G_{\mathbb{C}})$,
the condition $(*)$ in
Definition \ref{Def-Galois-coh}
forces that 
the value of $g$ at the identity element 
(resp.~nontrivial element)
of $\operatorname{Gal}(\mathbb{C}/\mathbb{R})$
is
the identity element of $G_{\mathbb{C}}$
(resp.~an element of $G_{\mathbb{C}}^{-\gamma}$).
Thus,
the evaluation map 
at the nontrivial element
of $\operatorname{Gal}(\mathbb{C}/\mathbb{R})$
induces the desired isomorphism.
\end{proof}

\begin{Fact}[{\cite[Prop.~1.12, Cor.~1.13]{BS-GC},
\cite[Prop.~36, Cor.~1 for Prop.~36]{Serre-GC}}]
\label{Fact-Galois-exact-seq}
Let $(G_{\mathbb{C}},\gamma)$ be an algebraic group defined over $\mathbb{R}$,
$(H_{\mathbb{C}},\gamma)$ its algebraic subgroup defined over $\mathbb{R}$.
Then, there exists an exact sequence 
of pointed sets:
\begin{equation}
	1
	\to 
	H_{\mathbb{C}}^{\gamma}
	\to 
	G_{\mathbb{C}}^{\gamma}
	\to 
	(G_{\mathbb{C}}/H_{\mathbb{C}})^{\gamma}
	\overset{\delta}{\to} 
	H^{1}(\mathbb{R},H_{\mathbb{C}})
	\to 
	H^{1}(\mathbb{R},G_{\mathbb{C}}),
\label{eq-Fact-long-ex-seq-Galois}
\end{equation}
where $\delta$ is the connecting homomorphism
and
$(G_{\mathbb{C}}/H_{\mathbb{C}})^{\gamma}$ is the subset of $\gamma$-invariant elements.
Moreover,
if $H^{1}(\mathbb{R},G_{\mathbb{C}})$ is trivial,
we have a bijection 
\begin{equation}
\delta : G_{\mathbb{C}}^{\gamma} \backslash (G_{\mathbb{C}}/H_{\mathbb{C}})^{\gamma} \xrightarrow{\;\sim\;\;} 
H^{1}(\mathbb{R},H_{\mathbb{C}}), \qquad
G_{\mathbb{C}}^{\gamma} gH_{\mathbb{C}} \mapsto [g^{-1}\gamma(g)]_{\gamma}
\skipover{
\begin{array}{ccc}
	G_{\mathbb{C}}^{\gamma}	\backslash (G_{\mathbb{C}}/H_{\mathbb{C}})^{\gamma}
	&
	\underset{\delta
	}{\xrightarrow{\;\sim\;\;}}
	&
	H^{1}(\mathbb{R},H_{\mathbb{C}})
	\\
	\rotatebox{90}{$\in$}
	&&
	\rotatebox{90}{$\in$}
	\\
	G_{\mathbb{C}}^{\gamma}	gH_{\mathbb{C}}
	&
	\mapsto
	&
	[g^{-1}\gamma(g)]_{\gamma}
\end{array}
}
\label{eq-Fact-hom-sp=1st-Galois}
\end{equation}
under the 
identification \eqref{eq-Lem-H^1-isom-conj},
where 
$g$ is an element of $G_{\mathbb{C}}$
satisfying $\gamma(gH_{\mathbb{C}})=gH_{\mathbb{C}}$.
\end{Fact}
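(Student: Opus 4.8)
The plan is to specialize the standard nonabelian Galois cohomology machinery to the cyclic group $\operatorname{Gal}(\mathbb{C}/\mathbb{R}) = \{1,\gamma\}$ of order two, where the description becomes entirely concrete. By Lemma \ref{Lem-Galois-1st-isom-Galois-conj} a class in $H^{1}(\mathbb{R},H_{\mathbb{C}})$ is represented by a single element $h\in H_{\mathbb{C}}^{-\gamma}$ (that is, $\gamma(h)=h^{-1}$) taken modulo $\gamma$-conjugation, and likewise for $G_{\mathbb{C}}$. So throughout I would work with representing elements rather than cocycle functions, which turns every verification into a short group-theoretic identity.

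First I would construct $\delta$. Given $x\in(G_{\mathbb{C}}/H_{\mathbb{C}})^{\gamma}$, pick any lift $g\in G_{\mathbb{C}}$ with $x=gH_{\mathbb{C}}$. Since $H_{\mathbb{C}}$ is $\gamma$-stable, $\gamma(x)=x$ forces $g^{-1}\gamma(g)\in H_{\mathbb{C}}$, and the computation $\gamma(g^{-1}\gamma(g))=\gamma(g)^{-1}g=(g^{-1}\gamma(g))^{-1}$ shows $g^{-1}\gamma(g)\in H_{\mathbb{C}}^{-\gamma}$, hence defines $[g^{-1}\gamma(g)]_{\gamma}\in H^{1}(\mathbb{R},H_{\mathbb{C}})$. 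I would then check well-definedness: replacing $g$ by $gh_{0}$ with $h_{0}\in H_{\mathbb{C}}$ $\gamma$-conjugates the value by $h_{0}$, and replacing $g$ by $g_{0}g$ with $g_{0}\in G_{\mathbb{C}}^{\gamma}$ leaves it literally unchanged. This produces a well-defined map on $G_{\mathbb{C}}^{\gamma}\backslash(G_{\mathbb{C}}/H_{\mathbb{C}})^{\gamma}$.

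Next I would verify exactness of the five-term sequence as pointed sets node by node. Injectivity at $H_{\mathbb{C}}^{\gamma}$ and exactness at $G_{\mathbb{C}}^{\gamma}$ are immediate from $G_{\mathbb{C}}^{\gamma}\cap H_{\mathbb{C}}=H_{\mathbb{C}}^{\gamma}$. Exactness at $(G_{\mathbb{C}}/H_{\mathbb{C}})^{\gamma}$ says $\delta(gH_{\mathbb{C}})=[e]_{\gamma}$ iff $g$ can be adjusted inside $H_{\mathbb{C}}$ to lie in $G_{\mathbb{C}}^{\gamma}$, which follows by rewriting $g^{-1}\gamma(g)=h^{-1}\gamma(h)$ as $\gamma(gh^{-1})=gh^{-1}$. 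Exactness at $H^{1}(\mathbb{R},H_{\mathbb{C}})$ records that a class $[h]_{\gamma}$ dies in $H^{1}(\mathbb{R},G_{\mathbb{C}})$ exactly when $h=g^{-1}\gamma(g)$ for some $g\in G_{\mathbb{C}}$, which is precisely the condition $\gamma(gH_{\mathbb{C}})=gH_{\mathbb{C}}$ together with $\delta(gH_{\mathbb{C}})=[h]_{\gamma}$.

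Finally, assuming $H^{1}(\mathbb{R},G_{\mathbb{C}})=1$, I would upgrade $\delta$ to a bijection. Surjectivity is immediate from the exactness just proved, since the target of the last map is a single point and hence every class lies in the image of $\delta$. For injectivity, suppose $\delta(g_{1}H_{\mathbb{C}})=\delta(g_{2}H_{\mathbb{C}})$, so $g_{1}^{-1}\gamma(g_{1})=h^{-1}g_{2}^{-1}\gamma(g_{2})\gamma(h)$ for some $h\in H_{\mathbb{C}}$; setting $a\coloneqq g_{2}hg_{1}^{-1}$, a short rearrangement of this identity (applying $\gamma$ is a homomorphism) gives $\gamma(a)=a$, so $a\in G_{\mathbb{C}}^{\gamma}$, while $a\cdot g_{1}H_{\mathbb{C}}=g_{2}H_{\mathbb{C}}$, placing the two cosets in one $G_{\mathbb{C}}^{\gamma}$-orbit. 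The main obstacle is not any single computation but the pointed-set bookkeeping: because $H_{\mathbb{C}}$ need not be normal, $G_{\mathbb{C}}/H_{\mathbb{C}}$ is only a homogeneous space, so the sequence must terminate at $H^{1}(\mathbb{R},G_{\mathbb{C}})$ and ``exactness'' has to be read throughout as equality of the base-point fibre with the image; the sole genuinely nontrivial step is exhibiting the $\gamma$-fixed conjugator $a$ that establishes injectivity.
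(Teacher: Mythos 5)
Your proof is correct; I checked each verification (the computation $\gamma(g^{-1}\gamma(g))=(g^{-1}\gamma(g))^{-1}$, the two well-definedness checks, exactness at each node, and the fixed conjugator $a=g_2hg_1^{-1}$ for injectivity) and they all go through. However, it takes a different route from the paper: the paper offers no proof at all, stating the result as a Fact quoted from \cite[Prop.~1.12, Cor.~1.13]{BS-GC} and \cite[Prop.~36, Cor.~1 for Prop.~36]{Serre-GC}, where it is established in the general framework of (pro)finite Galois groups acting on group schemes over arbitrary fields, phrased in terms of cocycles. What you do instead is specialize from the outset to the order-two group $\operatorname{Gal}(\mathbb{C}/\mathbb{R})$ and use the twisted-conjugation model of Lemma \ref{Lem-Galois-1st-isom-Galois-conj}, which reduces every step to a one-line identity in an abstract group equipped with an involution --- note that your argument never uses algebraicity of $G_{\mathbb{C}}$ or $H_{\mathbb{C}}$, only the group structure and $\gamma^{2}=\mathrm{id}$. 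What each approach buys: the citation inherits the full generality and keeps the appendix short, which is appropriate since the paper only needs the statement as a tool; your version makes the appendix genuinely self-contained and matches the computational style in which the paper actually applies the fact (e.g.\ the explicit evaluation of $\delta$ in \eqref{eq-Sect-CC-delta(T)=...}), at the cost of reproving standard material. One point worth keeping from your write-up in any event is the closing caveat: since $H_{\mathbb{C}}$ is not assumed normal, \eqref{eq-Fact-long-ex-seq-Galois} is exact only as pointed sets (fibre over the base point equals the image), and injectivity of $\delta$ on $G_{\mathbb{C}}^{\gamma}\backslash(G_{\mathbb{C}}/H_{\mathbb{C}})^{\gamma}$ is an extra argument beyond exactness --- exactly the content separating \cite[Cor.~1.13]{BS-GC} from \cite[Prop.~1.12]{BS-GC}, and your conjugator $a$ supplies it correctly.
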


\begin{Corollary}
\label{Cor-orbit-decomp-by-Galois-cohomology}
Let $(G_{\mathbb{C}},\gamma)$ be an algebraic group defined over $\mathbb{R}$,
$(P_{\mathbb{C}},\gamma)$
and
$(B_{\mathbb{C}},\gamma)$
algebraic subgroups defined over $\mathbb{R}$
of $(G_{\mathbb{C}},\gamma)$
satisfying
$
	H^{1}(\mathbb{R},B_{\mathbb{C}})=1
$.
Let us choose a complete set of representatives $\Xi'\subset G_{\mathbb{C}}$ 
of the double coset decomposition 
$ 
	B_{\mathbb{C}}\backslash G_{\mathbb{C}}/P_{\mathbb{C}} 
$.
\skipover{
Assume that there exists a subset $\Xi'\subset G_{\mathbb{C}}$ such that the orbit decomposition 
of $B_{\mathbb{C}}\backslash G_{\mathbb{C}}/P_{\mathbb{C}}$ is given by
\begin{equation*}
	B_{\mathbb{C}}\backslash G_{\mathbb{C}}/P_{\mathbb{C}}
	\simeq 
	\coprod\nolimits_{\xi'\in \Xi}
	B_{\mathbb{C}}\xi' P_{\mathbb{C}}.
\end{equation*}
}
Define a subset $\Xi\subset\Xi'$
by 
$
	\Xi
	\coloneqq
	\{\xi\in\Xi'\mid
	(B_{\mathbb{C}}\xi P_{\mathbb{C}})^{\gamma}\neq \emptyset\}
$.
Retaking each representative of
$B_{\mathbb{C}}\xi P_{\mathbb{C}}$ for $\xi \in \Xi$ if necessary,
we can assume that $\gamma(\xi P_{\mathbb{C}})=\xi P_{\mathbb{C}}$ for any $\xi \in \Xi$.
Then, we have a bijection
\begin{equation*}
	B_{\mathbb{C}}^{\gamma}
	\backslash
	(G_{\mathbb{C}}/P_{\mathbb{C}})^{\gamma}
	\simeq 
	\coprod\nolimits_{\xi\in\Xi}
	H^{1}(\mathbb{R},(B_{\mathbb{C}})_{\xi P_{\mathbb{C}}}),
\end{equation*}
where
$(B_{\mathbb{C}})_{\xi P_{\mathbb{C}}}$
is the stabilizer in $B_{\mathbb{C}}$
at $\xi P_{\mathbb{C}}\in G_{\mathbb{C}}/P_{\mathbb{C}}$.
\end{Corollary}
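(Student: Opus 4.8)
The plan is to reduce the statement to the single-orbit case and then invoke Fact \ref{Fact-Galois-exact-seq}. First I would record that $\gamma$, being an automorphism of $G_{\mathbb{C}}$ that stabilizes $B_{\mathbb{C}}$, permutes the $B_{\mathbb{C}}$-orbits on $G_{\mathbb{C}}/P_{\mathbb{C}}$. Consequently a $B_{\mathbb{C}}$-orbit can meet the fixed locus $(G_{\mathbb{C}}/P_{\mathbb{C}})^{\gamma}$ only if it is $\gamma$-stable, and by the very definition of $\Xi$ the orbits meeting the fixed locus are exactly those indexed by $\xi\in\Xi$. Since $B_{\mathbb{C}}^{\gamma}\subseteq B_{\mathbb{C}}$ preserves each individual $B_{\mathbb{C}}$-orbit, taking $\gamma$-fixed points and then quotienting by $B_{\mathbb{C}}^{\gamma}$ is compatible with this partition, so that
\begin{equation*}
	B_{\mathbb{C}}^{\gamma}\backslash (G_{\mathbb{C}}/P_{\mathbb{C}})^{\gamma}
	=
	\coprod\nolimits_{\xi\in\Xi}
	B_{\mathbb{C}}^{\gamma}\backslash (B_{\mathbb{C}}\xi P_{\mathbb{C}}/P_{\mathbb{C}})^{\gamma}.
\end{equation*}

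Next, I would fix $\xi\in\Xi$ and use the normalization $\gamma(\xi P_{\mathbb{C}})=\xi P_{\mathbb{C}}$, which is legitimate precisely because $(B_{\mathbb{C}}\xi P_{\mathbb{C}})^{\gamma}\neq\emptyset$ allows us to choose a $\gamma$-fixed base point for the orbit. The orbit map $b\mapsto b\cdot\xi P_{\mathbb{C}}$ gives a $B_{\mathbb{C}}$-equivariant identification $B_{\mathbb{C}}\xi P_{\mathbb{C}}/P_{\mathbb{C}}\simeq B_{\mathbb{C}}/(B_{\mathbb{C}})_{\xi P_{\mathbb{C}}}$, and because the base point is $\gamma$-fixed this identification is moreover $\gamma$-equivariant. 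In particular $(B_{\mathbb{C}})_{\xi P_{\mathbb{C}}}=B_{\mathbb{C}}\cap \xi P_{\mathbb{C}}\xi^{-1}$ is $\gamma$-stable (from $\xi^{-1}\gamma(\xi)\in P_{\mathbb{C}}$), so that $((B_{\mathbb{C}})_{\xi P_{\mathbb{C}}},\gamma)$ is an algebraic subgroup of $(B_{\mathbb{C}},\gamma)$ defined over $\mathbb{R}$, and Fact \ref{Fact-Galois-exact-seq} is applicable to the pair $(B_{\mathbb{C}},(B_{\mathbb{C}})_{\xi P_{\mathbb{C}}})$ in place of $(G_{\mathbb{C}},H_{\mathbb{C}})$.

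Then, since $H^{1}(\mathbb{R},B_{\mathbb{C}})=1$ is exactly the standing hypothesis, the bijection \eqref{eq-Fact-hom-sp=1st-Galois} yields
\begin{equation*}
	B_{\mathbb{C}}^{\gamma}\backslash (B_{\mathbb{C}}/(B_{\mathbb{C}})_{\xi P_{\mathbb{C}}})^{\gamma}
	\xrightarrow{\;\sim\;\;}
	H^{1}(\mathbb{R},(B_{\mathbb{C}})_{\xi P_{\mathbb{C}}}).
\end{equation*}
Transporting the left-hand side along the $\gamma$-equivariant identification of the previous step turns it into $B_{\mathbb{C}}^{\gamma}\backslash (B_{\mathbb{C}}\xi P_{\mathbb{C}}/P_{\mathbb{C}})^{\gamma}$, and assembling these bijections over $\xi\in\Xi$ produces the asserted decomposition.

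The group-theoretic bookkeeping is routine; the one step deserving real care — and the point I would check most carefully — is the reduction in the first paragraph. I must verify that the fixed locus is exactly the disjoint union of the $\gamma$-fixed loci of the $\gamma$-stable orbits (so that no fixed point lurks in an orbit that $\gamma$ moves, and conversely that $\gamma$-stability together with $(B_{\mathbb{C}}\xi P_{\mathbb{C}})^{\gamma}\neq\emptyset$ is what guarantees a usable $\gamma$-fixed representative), and that passing to $B_{\mathbb{C}}^{\gamma}$-quotients genuinely keeps the contributions of distinct $\xi$ separate. Once this partition is in hand, the remainder is a direct appeal to Fact \ref{Fact-Galois-exact-seq}.
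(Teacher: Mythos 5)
Your proof is correct and follows essentially the same route as the paper's: partition $(G_{\mathbb{C}}/P_{\mathbb{C}})^{\gamma}$ along the $B_{\mathbb{C}}$-orbit decomposition, identify each orbit indexed by $\Xi$ with $B_{\mathbb{C}}/(B_{\mathbb{C}})_{\xi P_{\mathbb{C}}}$ via the $\gamma$-fixed base point, and conclude by the bijection \eqref{eq-Fact-hom-sp=1st-Galois} of Fact \ref{Fact-Galois-exact-seq} using $H^{1}(\mathbb{R},B_{\mathbb{C}})=1$. The only difference is that you make explicit the $\gamma$-equivariance of the orbit identification and the $\gamma$-stability of the stabilizer $(B_{\mathbb{C}})_{\xi P_{\mathbb{C}}}$, checks the paper leaves implicit.
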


\begin{proof}
As a set,
$G_{\mathbb{C}}/P_{\mathbb{C}}$ is a disjoint union of double cosets:
$
	G_{\mathbb{C}}/P_{\mathbb{C}}
	=
	\coprod\nolimits_{\xi'\in \Xi'}
	B_{\mathbb{C}}\xi' P_{\mathbb{C}}
$.
Therefore,
taking $\gamma$-fixed points,
we have
\begin{equation*}
	(G_{\mathbb{C}}/P_{\mathbb{C}})^{\gamma}
	=
	\coprod\nolimits_{\xi'\in \Xi'}
	(B_{\mathbb{C}}\xi' P_{\mathbb{C}})^{\gamma}
	=
	\coprod\nolimits_{\xi\in \Xi}
	(B_{\mathbb{C}}\xi P_{\mathbb{C}})^{\gamma}
	.
\end{equation*}
Since 
$B_{\mathbb{C}}\xi P_{\mathbb{C}}/ P_{\C} \simeq B_{\mathbb{C}}/(B_{\mathbb{C}})_{\xi P_{\C}}$
as $B_{\mathbb{C}}$-homogeneous spaces,
the above formula becomes 
\begin{equation*}
	(G_{\mathbb{C}}/P_{\mathbb{C}})^{\gamma}
	=	
	\coprod\nolimits_{\xi\in \Xi}
	(B_{\mathbb{C}}/(B_{\mathbb{C}})_{\xi P_{\C}})^{\gamma},
\end{equation*}
which implies
\begin{equation*}
	B_{\mathbb{C}}^{\gamma}\backslash
	(G_{\mathbb{C}}/P_{\mathbb{C}})^{\gamma}
	=	
	\coprod\nolimits_{\xi \in \Xi}
	B_{\mathbb{C}}^{\gamma}\backslash
	(B_{\mathbb{C}}/(B_{\mathbb{C}})_{\xi P_{\C}})^{\gamma}
	\simeq 
	\coprod\nolimits_{\xi \in\Xi}
	H^{1}(\mathbb{R},(B_{\mathbb{C}})_{\xi P_{\C}})
\end{equation*}
by the isomorphism
\eqref{eq-Fact-hom-sp=1st-Galois}.
This completes the proof.
\end{proof}

\begin{Fact}[{\cite[Prop.~1.17]{BS-GC},
\cite[Prop.~38]{Serre-GC}}]
\label{Fact-Galois-exact-seq-long}
Let $(G_{\mathbb{C}},\gamma)$ be an algebraic group defined over $\mathbb{R}$,
$(H_{\mathbb{C}},\gamma)$ its algebraic subgroup defined over $\mathbb{R}$.  
If $G_{\mathbb{C}}/H_{\mathbb{C}}$ is a group (i.e., if $ H_{\mathbb{C}} $ is a normal subgroup),
we can add one more term to the above long exact sequence \eqref{eq-Fact-long-ex-seq-Galois} to get
\begin{equation*}
	1
	\to 
	H_{\mathbb{C}}^{\gamma}
	\to 
	G_{\mathbb{C}}^{\gamma}
	\to 
	(G_{\mathbb{C}}/H_{\mathbb{C}})^{\gamma}
	\overset{\delta}{\to} 
	H^{1}(\mathbb{R},H_{\mathbb{C}})
	\to 
	H^{1}(\mathbb{R},G_{\mathbb{C}})
	\to 
	H^{1}(\mathbb{R},G_{\mathbb{C}}/H_{\mathbb{C}})
	.
\end{equation*}
\end{Fact}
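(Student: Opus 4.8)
The plan is to take the five-term exact sequence of Fact~\ref{Fact-Galois-exact-seq} as given and to supply only the one additional arrow on the right, together with the verification of exactness at the term $H^{1}(\mathbb{R},G_{\mathbb{C}})$. Throughout, write $Q_{\mathbb{C}}\coloneqq G_{\mathbb{C}}/H_{\mathbb{C}}$. Since $H_{\mathbb{C}}$ is normal and $\gamma$-stable, $Q_{\mathbb{C}}$ is itself an algebraic group defined over $\mathbb{R}$ in the sense of Convention~\ref{Conv-defined-over-R}, so that $H^{1}(\mathbb{R},Q_{\mathbb{C}})$ is defined via Definition~\ref{Def-Galois-coh}, and the quotient map $\pi\colon G_{\mathbb{C}}\to Q_{\mathbb{C}}$ is a $\gamma$-equivariant homomorphism. (This is exactly where the hypothesis ``$G_{\mathbb{C}}/H_{\mathbb{C}}$ is a group'' is used: without it the rightmost cohomology set is not even defined.)

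First I would construct the map $\pi_{*}\colon H^{1}(\mathbb{R},G_{\mathbb{C}})\to H^{1}(\mathbb{R},Q_{\mathbb{C}})$ by functoriality. For $g\in Z^{1}(\mathbb{R},G_{\mathbb{C}})$, the composite $\pi\circ g\colon \operatorname{Gal}(\mathbb{C}/\mathbb{R})\to Q_{\mathbb{C}}$ again satisfies the cocycle condition $(*)$ of Definition~\ref{Def-Galois-coh}, because $\pi$ is a $\gamma$-equivariant homomorphism; moreover the defining relation $\sim$ is carried by $\pi$ to the analogous relation on $Z^{1}(\mathbb{R},Q_{\mathbb{C}})$. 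Hence $[g]\mapsto[\pi\circ g]$ descends to a well-defined morphism of pointed sets.

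The core of the argument is exactness at $H^{1}(\mathbb{R},G_{\mathbb{C}})$, that is, the equality of the image of $H^{1}(\mathbb{R},H_{\mathbb{C}})\to H^{1}(\mathbb{R},G_{\mathbb{C}})$ with the preimage $\pi_{*}^{-1}(\text{base point})$. The inclusion ``$\subseteq$'' is immediate: a class in the image is represented by a cocycle valued in $H_{\mathbb{C}}=\ker\pi$, so $\pi\circ g$ is the trivial cocycle. For the reverse inclusion, suppose $\pi_{*}[g]$ is the base point, so that $\pi(g_{x})=\bar h^{-1}\,x(\bar h)$ for some $\bar h\in Q_{\mathbb{C}}$ and all $x$. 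Lifting $\bar h$ to some $h\in G_{\mathbb{C}}$ (possible by surjectivity of $\pi$) and replacing $g$ by the $\sim$-equivalent cocycle $g'_{x}\coloneqq h\,g_{x}\,x(h)^{-1}$, one computes $\pi(g'_{x})=\bar h\,(\bar h^{-1}x(\bar h))\,x(\bar h)^{-1}=1$, so $g'_{x}\in H_{\mathbb{C}}$ for all $x$. Thus $g'\in Z^{1}(\mathbb{R},H_{\mathbb{C}})$ and $[g]=[g']$ lies in the image.

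I expect the twisting step in this second inclusion to be the main obstacle, precisely because we are working with \emph{nonabelian} cohomology: exactness must be read in the pointed-set sense (image equals the preimage of the distinguished point, never a group-theoretic kernel), and one must check both that the lift $h$ exists and that the modified $g'$ is still a genuine cocycle now valued in the subgroup $H_{\mathbb{C}}$. In the present situation this is far lighter than the general case: since $\operatorname{Gal}(\mathbb{C}/\mathbb{R})\cong\mathbb{Z}/2$, Lemma~\ref{Lem-Galois-1st-isom-Galois-conj} identifies every cohomology set with $G_{\mathbb{C}}^{-\gamma}/\underset{\gamma}{\sim}$, so the entire argument collapses to a single manipulation of the element $g_{\gamma}\in G_{\mathbb{C}}^{-\gamma}$ and its $\gamma$-conjugacy class, which is routine.
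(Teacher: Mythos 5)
Your proof is correct, but note that the paper itself offers no argument for this statement: it is imported as a known Fact, with the proof delegated to \cite[Prop.~1.17]{BS-GC} and \cite[Prop.~38]{Serre-GC}. What you have written is essentially the standard proof from those references. The construction of $\pi_{*}$ by functoriality is sound, and your twisting argument for exactness at $H^{1}(\mathbb{R},G_{\mathbb{C}})$ is the right one: the inclusion of the image in the fibre of the base point is immediate since $H_{\mathbb{C}}=\ker\pi$, and conversely a cocycle $g$ with $\pi(g_{x})=\bar{h}^{-1}\,x(\bar{h})$ becomes, after twisting by a lift $h$ of $\bar{h}$, the cohomologous cocycle $g'_{x}=h\,g_{x}\,x(h)^{-1}$ with values in $H_{\mathbb{C}}$; both of the points you flag (existence of the lift, and $g'$ being again a genuine cocycle) do hold, since $\pi$ is surjective on $\mathbb{C}$-points and the cocycle identity for $g'$ is a direct computation. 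Your closing remark that, because $\operatorname{Gal}(\mathbb{C}/\mathbb{R})\cong\mathbb{Z}/2$, the whole argument can be compressed via Lemma~\ref{Lem-Galois-1st-isom-Galois-conj} into a single manipulation of $g_{\gamma}\in G_{\mathbb{C}}^{-\gamma}$ up to $\gamma$-conjugacy is also accurate, and it matches the concrete style in which the paper carries out all of its explicit cohomology computations (e.g.\ the proof of Lemma~\ref{Lem-H^1(B_C,R)=1}). What your approach buys is a self-contained appendix; what the paper's citation buys is brevity, plus coverage of the facts you use silently, namely that $G_{\mathbb{C}}/H_{\mathbb{C}}$ exists as an algebraic group defined over $\mathbb{R}$ and that the quotient map is surjective on points.
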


\end{document}